\newtheorem*{rep@theorem}{\rep@title}
\newcommand{\newreptheorem}[2]{\newtheorem*{rep@#1}{\rep@title}
\newenvironment{rep#1}[1]{\def\rep@title{#2 \ref*{##1}}
\begin{rep@#1}}
{\end{rep@#1}}} 
\theoremstyle{plain}
  \newtheorem{thm}{Theorem}[section]
  \newtheorem*{thm*}{Theorem}
  \newtheorem{prop}[thm]{Proposition}
  \newtheorem{lem}[thm]{Lemma}
  \newtheorem{cor}[thm]{Corollary}
\theoremstyle{definition}
  \newtheorem{definition}[thm]{Definition}
  \newtheorem{example}[thm]{Example}
\theoremstyle{remark}
  \newtheorem{remark}[thm]{Remark}
\numberwithin{equation}{section}
\newcommand{\Z}{\mathbb{Z}}
\newcommand{\BS}{\mathrm{BS}}
\DeclareMathOperator{\lk}{lk}
\DeclareRobustCommand{\TT}{{\scalebox{1.5}[1.6]{$\tau$}}}
\DeclareRobustCommand{\TTStrict}{{\mathcal{T}_{<}}}
\DeclareRobustCommand{\TTMetric}{{\TT'}}
\DeclareRobustCommand{\TTMetricStrict}{{\TT'_{\hspace{-4pt}<}}}
\DeclareRobustCommand{\CC}{{\scalebox{1.6}[1.0]{$\upzeta$}}}
\DeclareRobustCommand{\CCStrict}{{\CC_{<}}}
\DeclareRobustCommand{\CCMetric}{{\CC'}}
\DeclareRobustCommand{\CCMetricStrict}{{\CC'_{<}}}
  \renewcommand\TT{Tau}%
  \renewcommand\TTStrict{Tau<}%
  \renewcommand\TTMetric{Tau'}%
  \renewcommand\TTMetricStrict{Tau'<}%
  \renewcommand\CC{Zeta}
  \renewcommand\CCStrict{Zeta<}
  \renewcommand\CCMetric{Zeta'}
  \renewcommand\CCMetricStrict{Zeta'<}
\def\HMF{\mathrm{C_{hm}}}
\def\HMV{\mathrm{T_{hm}}}
\def\HHarmonic{\mathcal{H}}
\newcommand{\cornerVertex}[1]{v({#1})}
\newcommand{\cornerFace}[1]{f({#1})} 
\newcommand{\cornerFirstWord}[1]{w_1({#1})} 
\newcommand{\cornerSecondWord}[1]{w_2({#1})}
\newcommand{\cornerRelation}[1]{r({#1})} 
\newcommand{\cornerFirstEdgeLength}[1]{\ell_1({#1})} 
\newcommand{\cornerSecondEdgeLength}[1]{\ell_2({#1})}
\newcommand{\cornerRelationLength}[1]{\ell_r({#1})} 
\newcommand{\cornerVertexLength}[1]{\ell_v({#1})}
\newcommand{\VV}{\mathcal{V}} 
\newcommand{\EE}{\mathcal{E}}
\newcommand{\FF}{\mathcal{F}} 
\newcommand{\vertices}[1]{\operatorname{\mathrm{vertices}}(#1)} 
\newcommand{\faces}[1]{\operatorname{\mathrm{faces}}(#1)} 
\newcommand{\graphFromPresentation}[1]{\Gamma({#1})}
\newcommand{\DontShow}[1]{} 
\def\bcr{\mathrel{\sim}} 
\def\tcbcr{\mathrel{\overline{\sim}}} 
\newcommand{\statementTheoremDR}{If a presentation $P$ satisfies condition $\TTMetric$ and has no proper powers, then it is DR.}
\newcommand{\statementTheoremArtin}{An Artin group $A_\Gamma$ is $2$-dimensional if and only if its standard presentation $P_\Gamma$ satisfies condition $\TTMetric$.}
\newcommand{\statementTheoremHyperbolic}{Let $G$ be a group which admits a finite presentation satisfying conditions $\TTMetricStrict$ and $C(3)$. Then $G$ is hyperbolic.}
\newcommand{\statementTheoremQuadraticDehn}{Let $P$ be a presentation satisfying conditions $\TTMetric-C'(\frac{1}{2})$ and such that all its relators have length $r$, then $P$ has a quadratic Dehn function. Moreover, if $P$ is finite the group $G$ presented by $P$ has solvable conjugacy problem.}
\newcommand{\statementPropTrivialWords}{Let $P=\langle X \mid R\rangle$ be a presentation of a group $G$, satisfying conditions $\TTMetric-C'(\frac{1}{2})$. Let $r_{\min}$ be the length of the shortest relator. Then any nontrivial word $W$ in the free group generated by $X$ representing the trivial element in $G$ has length at least $r_{\min}$. In particular, if $P$ has a relator of length greater than or equal to $2$, then $G$ is nontrivial.}
\begin{document}

\title[Generalized small cancellation conditions]{Generalized  small cancellation conditions, non-positive curvature and diagrammatic reducibility}

\author[M.A. Blufstein]{Mart\'in Axel Blufstein}
\author[E.G. Minian]{El\'ias Gabriel Minian}
\author[I. Sadofschi Costa]{Iv\'an Sadofschi Costa}
\address{Departamento  de Matem\'atica - IMAS\\
 FCEyN, Universidad de Buenos Aires. Buenos Aires, Argentina.}
\email{mblufstein@dm.uba.ar}
\email{gminian@dm.uba.ar}
\email{isadofschi@dm.uba.ar}

\subjclass[2010]{20F65, 20F67, 20F06, 20F10, 08A50, 20M05}

\keywords{Small cancellation theory, word problem, Dehn function, Artin groups, non-positive curvature, conjugacy problem, hyperbolic groups, isoperimetric inequality}

\thanks{Researchers of CONICET. Partially supported by grants PIP 11220170100357, PICT 2017-2997, PICT-2017-2806 and UBACYT 20020160100081BA}

\begin{abstract}
We present a metric condition $\TTMetric$  which describes the geometry of classical small cancellation groups and applies also to other known classes of groups such as two-dimensional Artin groups. We prove that presentations satisfying condition $\TTMetric$ are diagrammatically reducible in the sense of Sieradski and Gersten. In particular we deduce that the standard presentation of an Artin group is aspherical if and only if it is diagrammatically reducible. We show that, under some extra hypotheses, $\TTMetric$-groups have quadratic Dehn functions and solvable conjugacy problem. In the spirit of Greendlinger's lemma, we prove that if a presentation $P=\langle X \mid R\rangle$ of group $G$ satisfies conditions $\TTMetric-C'(\frac{1}{2})$, the length of any nontrivial word in the free group generated by $X$ representing the trivial element in $G$ is at least that of the shortest relator. We also introduce a strict metric condition $\TTMetricStrict$, which implies hyperbolicity. Finally we investigate non-metric and dual variants of these conditions, and study a harmonic mean version of small cancellation theory.
\end{abstract}

\maketitle

\section{Introduction}

The first ideas behind small cancellation theory appeared  more than one hundred years ago in the work of M. Dehn.
In \cite{Dehn} Dehn formulated the word and conjugacy problems and later he presented an algorithm that solved these problems for the fundamental groups of closed orientable surfaces of genus $g\geq 2$ \cite{Dehn2}.
The key property of the standard presentations of such groups is that any nontrivial product of two different cyclic permutations of the single relator or its inverse admits only a little cancellation (see \cite{LyndonSchupp}).
There are currently many variants of small cancellation theory.
The development of the classical theory started with the works of Tartakovski\u{\i} \cite{Tartakovskii}, Schiek \cite{Schiek}, Greendlinger \cite{Greendlinger1,Greendlinger2}, Lyndon \cite{Lyndon}, Schupp \cite{Schupp} (see also \cite{LyndonSchupp}), and continued later with the works of Rips \cite{Rips1,Rips2,Rips3} and Olshanski\u{\i} \cite{Olshanskii}.
The graphical variant of the theory started with Gromov \cite{Gromov03} and continued with the works of Ollivier \cite{Ollivier} and Gruber \cite{Gruber}.
The classical conditions of small cancellation require the pieces to be small with respect to the relators.
Concretely, in the metric condition $C'(\lambda)$ the length $\ell(s)$ of any piece $s$ of a relator $r$ is smaller than $\lambda \ell(r)$.
The non-metric variant $C(p)$ requires that no relator is a product of fewer than $p$ pieces.
In graphical small cancellation the presentations are constructed from a labeled graph and one requires the pieces in the graph to be small.
Small cancellation is not only related to the word and conjugacy problems.
It has applications to curvature problems, hyperbolicity, asphericity and diagrammatic reducibility.
The notion of diagrammatic reducibility first appeared in the work of Sieradski \cite{Sieradski} and was developed by Gersten \cite{Gersten}.
A presentation $P$ with no proper powers is diagrammatically reducible (DR) if all spherical diagrams over $P$ are reducible.
Diagrammatic reducibility, which is a stronger condition than asphericity, is relevant when studying equations over groups.
We refer the reader to Gersten's papers \cite{Gersten, GerstenBranchedCoverings} for more details on diagrammatic reducibility and applications to equations over groups.

In this article we introduce a metric condition $\TTMetric$ and a strict version $\TTMetricStrict$.
These conditions are defined in terms of sums of lengths of pieces incident to interior vertices of reduced diagrams (see Definition \ref{DefinitionTT}). They generalize the classical metric small cancellation conditions and apply also to other known classes of groups.
Our motivation is to encompass the geometries of classical small cancellation groups, $2$-dimensional Artin groups, and one-relator groups satisfying a certain condition $(T')$ recently introduced in \cite{BlufsteinMinian}, under the same unifying geometry. In the case of Artin groups, we show that condition $\TTMetric$ is equivalent to being two-dimensional.
\begin{repthm}{TheoremArtin}
\statementTheoremArtin
\end{repthm}
We show that presentations (and groups) satisfying conditions  $\TTMetric$ and $\TTMetricStrict$ have nice properties.
We analyze first curvature problems and diagrammatic reducibility. In this direction we obtain the following result.
\begin{repthm}{TheoremDR}
\statementTheoremDR
\end{repthm}
As an immediate consequence of the above theorem we deduce that the standard presentation of an Artin group is aspherical if and only if it is DR (see Remark \ref{RemarkArtinDR}).
We then prove that groups satisfying the strict condition $\TTMetricStrict$ are hyperbolic.
\begin{repthm}{TheoremHyperbolic}
\statementTheoremHyperbolic
\end{repthm}
We also prove that $\TTMetric$-groups have quadratic Dehn functions and solvable conjugacy problem, provided all the relators in the presentation have the same length.
\begin{repthm}{TheoremQuadraticDehn}
\statementTheoremQuadraticDehn
\end{repthm}
In fact we believe that the result is still valid without the hypothesis on the length of the relators although we could not  prove the general result. At the end of Section \ref{SectionQuadraticDehn} we discuss a strategy to prove solvability of the word problem for a wider class of groups using this result.

In particular Theorem \ref{TheoremQuadraticDehn} proves the existence of quadratic Dehn functions and the solution of the conjugacy problem for a subclass of $2$-dimensional Artin groups. The solution of the word problem for $2$-dimensional Artin groups was proved by Chermak \cite{Chermak}. Recently Huang and Osajda proved that all $2$-dimensional Artin groups have quadratic Dehn function and solvable conjugacy problem \cite{HuangOsajda}. Our results provide, although partially, an alternative, direct and simpler proof of these facts.

In the spirit of Greendlinger's lemma, we obtain the following result.
\begin{repprop}{propTrivialWords}
\statementPropTrivialWords
\end{repprop}

Conditions $\TTMetric$ and $\TTMetricStrict$ are defined in terms of the lengths of the pieces and relators incident to interior vertices of reduced diagrams over the presentations.
At first glance it may seem  that these conditions are difficult to check since the definitions require to analyze all interior vertices of every possible diagram over the presentation $P$.
However we prove below that, for finite presentations, these conditions can be verified by analyzing the directed cycles in a finite weighted graph $\Gamma(P)$ associated to $P$.
In this direction, in Section \ref{SectionAlgorithm} we describe an algorithm which decides whether a given finite presentation $P$ satisfies these conditions.
This algorithm has been implemented in the \textsf{GAP}\cite{GAP} package \textsf{SmallCancellation} \cite{SmallCancellation}.
As an immediate consequence of the construction of the algorithm, we also deduce that for any finite presentation $P$ satisfying the strict condition $\TTMetricStrict$, the curvature of the interior vertices of any diagram $\Delta$ over $P$ is bounded above by a negative constant $N$, which is independent of the diagram.
This fact is used in Theorem \ref{TheoremHyperbolic} to prove hyperbolicity of the groups satisfying condition $\TTMetricStrict$.

In Section \ref{SectionNonMetricAndDual} we introduce non-metric and dual analogues to conditions $\TTMetric$ and $\TTMetricStrict$.
Our non-metric conditions extend the classical non-metric small cancellation conditions.
We obtain generalizations of known results, similarly as for the metric conditions.
In particular we extend a result of Gersten on diagrammatic reducibility (see Theorem \ref{TheoremDRfornonmetric} below).
We also relate our non-metric conditions with previous results of Huck and Rosebrock \cite{HuckRosebrock}.
In contrast to the classical small cancellation conditions, there is no relation between our metric and non-metric conditions.
We illustrate this fact with some examples.
Finally, in Section \ref{SectionHarmonic} we introduce harmonic mean variants of the classical small cancellation conditions.

\section{The small cancellation condition $\TTMetric$}\label{sectionTTMetric}

Let $K$ be a combinatorial $2$-complex.
A diagram $\Delta$ in $K$ is a combinatorial map $\varphi \colon M\to K$ where $M$ is a combinatorial structure on the sphere to which, perhaps, we remove some open $2$-cells. 
This includes spherical diagrams (when $M$ is the whole sphere), (singular) disk diagrams (when $M$ is a sphere with one $2$-cell removed), and annular diagrams (the sphere with two $2$-cells removed).
As usual,  $0$-cells, $1$-cells and $2$-cells will be called respectively vertices, edges and faces. If $P=\langle X \ | \ R \rangle$ is a presentation of a group $G$, a diagram over $P$ is a diagram $\varphi \colon M\to K_P$ where $K_P$ is the standard $2$-complex associated to the presentation $P$.
Throughout this article we will assume that the relators of the presentations are cyclically reduced and no relator is a cyclic permutation of another relator or of the inverse of another relator.
Since $M$ is orientable we can fix an orientation in the usual way, so that when traversing the boundaries of the $2$-cells the edges in the intersection of two faces $f,f'$ are traversed twice, once in each possible orientation.
The map $\varphi \colon M\to K_P$ induces a labeling on the edges of $M$ by elements of $X$ and their inverses.
The label on the boundary of any oriented face of the diagram (starting at any vertex) is called a boundary label.
Note that boundary labels are elements in the set $R^*$ of all cyclic permutations of the elements of $R$ and their inverses.
A diagram $\Delta$ is \textit{reducible} if it contains two faces $f,f'$ such that the intersection of their boundaries $\partial f\cap \partial f'$ contains an edge such that the boundary labels of $f$ and $f'$ read with opposite orientations and starting at a vertex of this edge coincide, otherwise $\Delta$ is called \textit{reduced} (see \cite[Chapter V]{LyndonSchupp} for more details).
The degree $d(v)$ of a vertex $v$ in a diagram $\Delta$ is the number of edges incident to $v$ (the edges with both boundary vertices at $v$ are counting twice).
A vertex $v$ is called interior if $v\notin \partial M$. 

Given a reduced diagram $\Delta$ over a presentation $P$, we can remove all interior vertices of degree $2$.
We obtain a new diagram where the interior edges correspond to \textit{pieces} in $R^*$.
Recall that a piece is a word which is a common prefix of two different elements of $R^*$.
The length $\ell(e)$ of an interior edge $e$ in this new diagram is defined as the length of the corresponding word (equivalently, it is the number of edges of the original diagram that were glued together to obtain $e$).
In what follows we consider diagrams with no interior vertices of degree $2$.

Recall that the link of a vertex $v$ is an epsilon sphere about $v$ and the corners of the $2$-cells at $v$ correspond to edges in the link.
The endpoints of a corner in $M$ (of a $2$-cell $f$) at $v$ correspond to edges in the diagram incident to $v$ (see \cite{Gersten, WiseSectionalCurvature}). 
Given a corner $c$ at an interior vertex $v$, we denote by $\cornerFirstEdgeLength{c}$ and $\cornerSecondEdgeLength{c}$ the lengths of the incident edges and by  $\cornerRelationLength{c}$ the length of the relator $r\in R$ corresponding to the $2$-cell $f$.
Let $d'_F(v)=\sum_{c \ni v} \frac{\cornerFirstEdgeLength{c}+\cornerSecondEdgeLength{c}}{\cornerRelationLength{c}}$, where the sum is taken over all corners at $v$.

\begin{definition}\label{DefinitionTT}
 We say that a presentation $P$ satisfies the small cancellation condition $\TTMetric$ if for every interior vertex of any reduced diagram over $P$ (with no interior vertices of degree $2$), $d'_F(v) \leq d(v)-2$. Similarly, $P$ satisfies the strict small cancellation condition $\TTMetricStrict$ if for every interior vertex of any reduced diagram over $P$, $d'_F(v) < d(v)-2$. A group $G$ which admits a presentation $P$ satisfying condition $\TTMetric$ (resp. $\TTMetricStrict$) is called a $\TTMetric$-group (resp. $\TTMetricStrict$-group).
\end{definition}

We investigate now the first examples of presentations satisfying conditions $\TTMetric$ and $\TTMetricStrict$.

\subsection*{Classical metric small cancellation conditions} It is easy to verify that the classical metric small cancellation conditions $C'(1/6)$, $C'(1/4)-T(4)$ and $C'(1/3)-T(6)$ imply condition $\TTMetricStrict$. We will show below that finitely presented $\TTMetricStrict-C(3)$-groups are hyperbolic, generalizing the classical result for small cancellation groups (see \cite{BridsonHaefliger, Gromov}).

\subsection*{Two-dimensional Artin groups}

Let $\Gamma$ be a finite simple graph with a labeling on the edges by integers $m\geq 2$.
The Artin group defined by $\Gamma$ is the group $A_\Gamma$ given by the following presentation $P_\Gamma$.
The generators of $P_\Gamma$ are the vertices of $\Gamma$, and there is a relation of the form
$$\underbrace{ababa\cdots}_{m \text{ letters}}=\underbrace{babab\cdots}_{m\text{ letters}}$$
for every pair of vertices $a$ and $b$  connected by an edge labeled by $m$.
By results of Charney and Davis \cite{CharneyDavis, CharneyDavisFiniteKP1}, it is well-known that an Artin group $A_\Gamma$ is $2$-dimensional (i.e. it has geometric dimension $2$) if and only if for every triangle in the graph $\Gamma$ with edges labeled by $p$, $q$ and $r$ we have $\frac{1}{p} + \frac{1}{q} + \frac{1}{r} \leq 1$ (see also \cite{HuangOsajda}).

We will show that an Artin group is $2$-dimensional if and only if its standard presentation $P_\Gamma$ satisfies condition $\TTMetric$.
We will also prove below that any group which admits a finite presentation $P$ satisfying conditions $\TTMetric$ and $C'(\frac{1}{2})$ and with all relators of the same length, has quadratic Dehn function and solvable conjugacy problem (see Theorem \ref{TheoremQuadraticDehn}). These results put together partially recover, with an alternative and simpler proof, similar results for $2$-dimensional Artin groups recently obtained by Huang and Osajda \cite{HuangOsajda}. 

\begin{thm}\label{TheoremArtin}
\statementTheoremArtin
\end{thm}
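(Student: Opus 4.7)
The proof has two directions; both rely only on the rigid form of the Artin relator $R_{ab}$, which has length $2m$ and alternates in $\{a^{\pm1},b^{\pm1}\}$.

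For $(\Leftarrow)$, I argue contrapositively. If $A_\Gamma$ is not two-dimensional, the Charney--Davis criterion gives a triangle $\{a,b,c\}$ of $\Gamma$ with labels $p,q,r$ for which $\tfrac{1}{p}+\tfrac{1}{q}+\tfrac{1}{r}>1$. I build a reduced disk diagram $\Delta$ over $P_\Gamma$ having a single interior vertex $v$ of degree $3$, obtained by gluing three $2$-cells carrying $R_{ab}$, $R_{bc}$, $R_{ac}$ (of lengths $2p,2q,2r$) around $v$. For each relator I pick a boundary vertex whose two incident edges are single generators, and glue so that consecutive $2$-cells meet along their common generator ($b$ between $R_{ab}$ and $R_{bc}$, $c$ between $R_{bc}$ and $R_{ac}$, $a$ between $R_{ac}$ and $R_{ab}$); such corners exist because every boundary vertex of an Artin relator has this form. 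Reducedness is immediate, as consecutive $2$-cells carry distinct relators. Each corner contributes $\tfrac{1+1}{2m_i}=\tfrac{1}{m_i}$, so $d'_F(v)=\tfrac{1}{p}+\tfrac{1}{q}+\tfrac{1}{r}>1=d(v)-2$, contradicting $\TTMetric$.

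For $(\Rightarrow)$, I assume every triangle of $\Gamma$ satisfies $\tfrac{1}{p}+\tfrac{1}{q}+\tfrac{1}{r}\le 1$ and verify $\TTMetric$. The first step is a \emph{piece classification}: any piece common to two distinct relators $R_{ab},R_{cd}$ uses only letters of $\{a,b\}\cap\{c,d\}$, so it has length at most $1$; pieces within a single relator are alternating words in its two letters. Let $v$ be an interior vertex of degree $d$ in a reduced diagram, with corners $c_1,\dots,c_d$ cyclically ordered, where $c_i$ lies in a $2$-cell carrying $R_i$ of length $2m_i$. In the \emph{pure case}, where consecutive corners always carry distinct relators, each interior edge at $v$ is a single generator, the cyclic sequence of these generators traces a closed walk of length $d$ in $\Gamma$, and every corner contributes exactly $\tfrac{1}{m_i}$ to $d'_F(v)$. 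For $d\ge 4$, $\sum\tfrac{1}{m_i}\le d/2\le d-2$; for $d=3$ the walk is a triangle of $\Gamma$ and the hypothesis gives $d'_F(v)\le 1=d-2$; $d\le 2$ is ruled out by reducedness.

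The main obstacle is the \emph{degenerate case} in which two consecutive corners at $v$ lie in $2$-cells carrying the same relator $R_{ab}$: the shared piece can then have length up to $m-1$, so a single corner might contribute as much as $\tfrac{m-1}{m}$. To control this I would use the alternating structure of Artin relators together with reducedness to show that a pair of same-relator $2$-cells meeting along a long piece can always be replaced: either (i) their union is locally equivalent to a fragment of a dihedral Artin subpresentation, in which case the combined contribution of the two corners can be bounded exactly as in the pure case via the corresponding shortened walk in $\Gamma$, or (ii) a local surgery replaces them by a reduced subdiagram with strictly fewer $2$-cells. Iterating reduces the estimate to the pure case above.
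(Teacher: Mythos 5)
Your $(\Leftarrow)$ direction (contrapositive) is sound: given a bad triangle of $\Gamma$ you exhibit a reduced diagram with a single interior vertex of degree $3$ and read off $d'_F(v)=\frac1p+\frac1q+\frac1r>1$; the paper treats this direction only implicitly, so your explicit construction is a welcome addition. Your ``pure case'' of $(\Rightarrow)$ is also correct and, in fact, uses a cleaner organizing idea than the paper's: the cyclic sequence of length-$1$ pieces traces a closed walk in $\Gamma$, which for $d=3$ is a triangle (the Charney--Davis hypothesis applies) and for $d\ge4$ gives $\sum 1/m_i\le d/2\le d-2$.

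The genuine gap is the degenerate case, which you explicitly flag but do not resolve, and the plan you sketch will not work. The critical configuration — illustrated in the paper's Figure~\ref{figone} — is a degree-$4$ interior vertex where \emph{all four} $2$-cells carry the \emph{same} relator $R_{ab}$, two having $v$ as a terminal vertex and two having $v$ in the interior of a side, with piece lengths $l_1,\dots,l_4$ possibly as large as they can be. This is a reduced diagram: there is no reducible pair, so your alternative~(ii) (``local surgery replaces them by a reduced subdiagram with strictly fewer $2$-cells'') does not apply. Alternative~(i) (``bound the combined contribution via a shortened walk in $\Gamma$'') also fails, because with $l_1+l_2=l_3+l_4=n$ the contribution is $d'_F(v)=2=d(v)-2$ exactly; there is no slack to give away, so you cannot replace a same-relator pair by a ``virtual'' corner contributing $\le 1/2$ as in the pure case. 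Hence iterating down to the pure case is impossible, and the degenerate case demands its own argument.

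What the paper actually does in this case — and what is missing from your sketch — is a structural analysis specific to Artin relators. First, if $v$ lies in the interior of a side of a $2$-cell, the corresponding corner already contributes at most $\frac12$, since the two pieces lie on a single side of length $m$; so one may assume at most three such cells. Second, by reducedness a $2$-cell with $v$ as a terminal vertex must have both its neighbours at $v$ carrying $v$ on a side (two adjacent same-relator cells both terminal at $v$ would fold). Third, a parity count on edge orientations around $v$ shows that the number of side-cells is even. Combining these three facts forces $d(v)\in\{4,5,6\}$, rules out $d=5,6$ except when there are already four side-cells (hence four summands $\le\frac12$), and pins down the only remaining configuration to exactly the $2$-terminal/$2$-side picture above; that case is then closed by the explicit estimate $l_1+l_2\le n$, $l_3+l_4\le n$. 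None of the terminal/side dichotomy, the reducedness constraint on adjacent terminal cells, or the orientation parity appears in your proposal, and without them the degenerate case remains open.
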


\begin{proof}
Let $A_\Gamma$ be an Artin group and let $K$ be the $2$-complex associated to its standard presentation.
Note that the $2$-cells of $K$ have two distinguished sides in which all edges have the same orientation.
If the label of the edge in $\Gamma$ corresponding to the relator is $m$, each of these sides has $m$ edges.
The terminal vertices of both sides are called initial and final vertices of the relator, according to the orientation of the edges (cf. \cite[Section 4.1]{HuangOsajda}).

Let $\varphi \colon M\to K$ be a reduced diagram.
We analyze first the interior vertices of degree $3$.
It is easy to see that vertices of degree $3$ only correspond to intersections of faces in the diagram which are mapped to three different relators that form a triangle in the graph $\Gamma$.
Since they are three different relators, the length of the three pieces involved is $1$.
If the labels of the edges in the triangle are $p$, $q$ and $r$, then the equation for the condition $\TTMetric$ is the following:
$$\frac{1+1}{2p}+\frac{1+1}{2q}+\frac{1+1}{2r} \leq 3-2.$$

That is, $\frac{1}{p} + \frac{1}{q} + \frac{1}{r} \leq 1$, which is exactly the necessary and sufficient condition for the Artin group to be $2$-dimensional.

Now we prove that condition $\TTMetric$ is always satisfied in interior vertices of degree greater than or equal to $4$ (for any Artin group, not necessarily two-dimensional).
Note that such an interior vertex $v$ can be a terminal vertex or it can be inside of one of the sides of the $2$-cells containing it.
It is not difficult to see that if not all of the $2$-cells incident to $v$ are mapped to the same relator, at least four of them will have a piece of length $1$. Also, the longest piece in a $2$-cell with boundary of length $2n$ is $n-1$, and therefore the summands in condition $\TTMetric$ can be at most $\frac{2n-2}{2n}$.
In conclusion, if there are $2$-cells incident to the vertex which are mapped to different relators, there are at least four summands in the equation for condition $\TTMetric$ which are less than or equal to $\frac{1}{2}$.
Then condition $\TTMetric$ is satisfied.

We analyze now the case that the vertex has degree greater than or equal to $4$ and all the $2$-cells are mapped to the same relator.
Observe that if a $2$-cell contains $v$ in one of its sides, the summand corresponding to that $2$-cell is at most $\frac{1}{2}$.
Therefore there are at most three of such $2$-cells, and the rest have to contain $v$ as a terminal vertex.
If a $2$-cell $f$ contains $v$ as a terminal vertex, then the two adjacent $2$-cells to $f$ contain the vertex on a side.
This implies that we can reduce ourselves to the cases where $v$ has degree $4$, $5$ or $6$.

We look at the orientation of the edges incident to $v$, traversing them in clockwise order.
If we pass by a $2$-cell that has $v$ as a terminal vertex, the orientation of these edges is preserved, and if not, it is reversed.
Therefore the number of $2$-cells having $v$ on one of its sides is even.
Therefore, when $v$ has degree $5$ or $6$, there are at least four $2$-cells that have $v$ on a side.

It only remains to check the case where $v$ has degree $4$, two of the $2$-cells contain it as a terminal vertex and the other two on a side.
This situation is illustrated in Figure \ref{figone}.
Let $l_1,l_2,l_3,l_4$ be the lengths of the pieces involved and let $2n$ be the length of the relator.

\begin{figure}[h]
\begin{tikzpicture}
\filldraw (0,0) circle (2pt)
(4,0) circle (2pt)
(8,0) circle (2pt)
(2,1) circle (2pt)
(6,1) circle (2pt)
(2,-1) circle (2pt)
(6,-1) circle (2pt);
\draw (0,0) .. controls (2,1.35) .. (4,0);
\draw (4,0) .. controls (6,1.35) .. (8,0);
\draw (0,0) .. controls (2,-1.35) .. (4,0);
\draw (4,0) .. controls (6,-1.35) .. (8,0);
\draw (2,1) .. controls (4,1.5) .. (6,1);
\draw (2,-1) .. controls (4,-1.5) .. (6,-1);
\draw (3.1,0.84) node {$l_1$};
\draw (4.9,0.86) node {$l_2$};
\draw (3.1,-0.85) node {$l_3$};
\draw (4.9,-0.8) node {$l_4$};
\draw (4.3,0) node {$v$};
\end{tikzpicture}
\caption{A vertex with degree $4$ with the $2$-cells mapping to the same relator.}
\label{figone}
\end{figure}
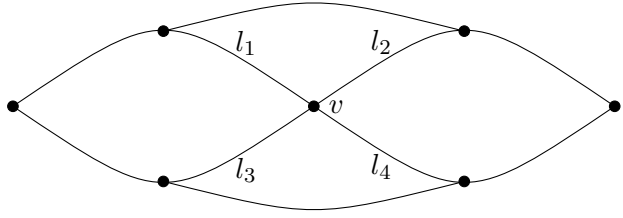

Here condition $\TTMetric$ can be rewritten as:
$$\frac{l_1+l_2}{2n}+\frac{l_2+l_4}{2n}+\frac{l_3+l_4}{2n}+\frac{l_1+l_3}{2n} = \frac{2l_1+2l_2+2l_3+2l_4}{2n}\leq 2.$$
Since $l_1+l_2\leq n$ and $l_3+l_4\leq n$, condition $\TTMetric$ is satisfied.
\end{proof}

\begin{remark}\label{RemarkArtinDR}
Combined with Theorem \ref{TheoremDR}, Theorem \ref{TheoremArtin} implies that the standard presentation $P_\Gamma$ of an Artin group is aspherical (i.e. $A_\Gamma$ is $2$-dimensional) if and only if $P_\Gamma$ is diagrammatically reducible (DR, for short).
Recall that a presentation $P$ with no proper powers is DR if all spherical diagrams over $P$ are reducible.
Note that being DR is in general a stronger condition than being aspherical (see \cite{Gersten, GerstenBranchedCoverings}).
\end{remark}

\subsection*{One-relator groups} 
In \cite{BlufsteinMinian} the first two authors introduced a small cancellation condition $(T')$ to study hyperbolicity of one-relator groups.
Condition $\TTMetricStrict$ generalizes condition $(T')$ to any presentation and Theorem \ref{TheoremHyperbolic} below provides an alternative and simpler proof of \cite[Thm. 3.1]{BlufsteinMinian} for one-relator groups.

\begin{example}
The following one-relator presentation does not satisfy conditions $C(6)$ nor 
$T(4)$, but it is $\TTMetric$. It also does not fall under the hypothesis of 
\cite[Thm. 3.1]{BlufsteinMinian} since it is $C'(\frac{1}{2})$, but not 
$C'(\frac{1}{4})$.
$$\langle a,b \mid a^3b^4a^3b^4(b^4a^3b^4a^3)^{-1} \rangle$$

\end{example}

\subsection*{Cyclic presentations}
The claims in the following examples can be verified using the \textsf{GAP} \cite{GAP} package \textsf{SmallCancellation} \cite{SmallCancellation}.

\begin{example}
The following cyclic presentation of a superperfect group satisfies condition $\TTMetricStrict$ but is not $C(6)$ nor $T(4)$.

$$\langle x_0, x_1, x_2,x_3,x_4 \mid x_{i+4}^{-1}x_{i+1}^{-1}x_i^{-1}(x_{i+4}x_{i+1})^2  \text{ for } i=0,\ldots, 4\rangle $$

\end{example}

\begin{example}
The following cyclic presentation of a superperfect group satisfies condition $\TTMetric-C'(\frac{1}{2})$
$$\langle x_0, \ldots, x_6 \mid x_{i+1} x_{i}^{-1} x_{i+6} x_{i+1}^{-1} x_{i} x_{i+6}^{-1} x_{i+2}^{-1} \text{ for } i=0,\ldots, 6\rangle$$
Then by Theorem \ref{TheoremQuadraticDehn} this group has a quadratic Dehn function.
However the group is not $\TTMetricStrict$, nor $C(6)$, $C(4)-T(4)$ or $C(3)-T(6)$.
\end{example}

\section{Non-positive curvature, diagrammatic reducibility and hyperbolicity}\label{sectionNonPositiveCurvature}

We recall first some basic notions on combinatorial curvature. Given a combinatorial $2$-complex $K$, we can assign a real number $w(c)$ to the corners, which we think of as angles.
This assignment is a {\em weight function} for the complex.
A finite combinatorial $2$-complex together with such a weight function is called an angled complex (see \cite{Gersten,WiseNonpositiveImmersions,WiseSectionalCurvature}).

Let $K$ be an angled complex. If $v$ is a vertex of $K$, its curvature is defined as $$\kappa(v) = 2\pi - \pi \chi(\lk_v) - \sum_{c \ni v}w(c).$$
Here $\chi(\lk_v)$ denotes the Euler characteristic of the link of $v$, and the sum is taken over all corners at $v$.
The curvature of a face $f$ is defined as $$\kappa(f) =  2\pi-\pi \ell(\partial f) + \sum_{c\in f}w(c),$$ where the sum is taken over all the corners in $f$ and $\ell(\partial f)$ is the number of edges in the boundary of $f$.
The following result can be found in \cite{BallmannBuyalo,WiseSectionalCurvature}.

\begin{thm}[Combinatorial Gauss--Bonnet Theorem]
Let $K$ be an angled $2$-complex. Then
$$\sum_{f\in \faces{K}}\kappa(f) +\hspace{-10pt} \sum_{v\in \vertices{K}}\hspace{-5pt} \kappa(v) = 2\pi\chi(K).$$
\end{thm}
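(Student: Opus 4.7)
The plan is to compute $\sum_{v} \kappa(v) + \sum_{f} \kappa(f)$ directly from the definitions and reorganize the resulting sums by double-counting corners, edge-ends and edges-in-boundaries.

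First I would expand both sums:
\begin{align*}
\sum_{v\in \vertices{K}} \kappa(v) + \sum_{f\in \faces{K}}\kappa(f)
&= 2\pi|\vertices{K}| - \pi\sum_{v} \chi(\lk_v) - \sum_{v}\sum_{c\ni v} w(c) \\
&\qquad + 2\pi|\faces{K}| - \pi\sum_{f}\ell(\partial f) + \sum_{f}\sum_{c\in f} w(c).
\end{align*}
Each corner $c$ is incident to exactly one vertex and lies in exactly one face, so the two weighted sums over corners are equal and cancel. This is the first key observation and is the reason the definitions of $\kappa(v)$ and $\kappa(f)$ carry opposite signs in front of the weights.

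Next I would rewrite $\sum_{v}\chi(\lk_v)$ combinatorially. The link $\lk_v$ is a graph whose vertices are the edge-ends incident to $v$ and whose edges correspond to the corners at $v$. Summing over all vertices, $\sum_{v} |\text{vertices of }\lk_v| = 2|\edges{K}|$ (each edge contributes two edge-ends, with a loop at $v$ contributing two ends at $v$), and $\sum_{v} |\text{edges of }\lk_v|$ equals the total number of corners in $K$. Since the corners of a face $f$ are in bijection with the edges of $\partial f$, this total equals $\sum_{f} \ell(\partial f)$. Therefore
\[
\sum_{v}\chi(\lk_v) = 2|\edges{K}| - \sum_{f}\ell(\partial f).
\]

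Substituting this into the expansion gives
\[
\sum_{v}\kappa(v) + \sum_{f}\kappa(f) = 2\pi|\vertices{K}| - 2\pi|\edges{K}| + \pi\sum_{f}\ell(\partial f) + 2\pi|\faces{K}| - \pi\sum_{f}\ell(\partial f),
\]
and the $\ell(\partial f)$ terms cancel, leaving $2\pi(|\vertices{K}|-|\edges{K}|+|\faces{K}|) = 2\pi\chi(K)$. The only step requiring care is the bookkeeping for $\chi(\lk_v)$: one must remember that a corner is a local object (one face, one vertex) so each corner contributes exactly once on each side of the identity, and that loops at a vertex contribute two edge-ends rather than one. Once this is set up correctly, the argument is a pure double-counting computation and no further ingredients are needed.
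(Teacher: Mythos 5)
Your proof is correct. The paper itself does not prove this theorem --- it cites \cite{BallmannBuyalo,WiseSectionalCurvature} and moves on --- so there is no in-text argument to compare against. Your double-counting computation is the standard proof of the combinatorial Gauss--Bonnet theorem: the weighted corner sums cancel because each corner belongs to exactly one vertex and exactly one face, and the identity $\sum_v \chi(\lk_v) = 2|\edges{K}| - \sum_f \ell(\partial f)$ follows from counting edge-ends (vertices of links) and corners (edges of links), the latter being in bijection with $\sum_f \ell(\partial f)$ since a face with $n$ boundary edges has exactly $n$ corners. The bookkeeping, including the caveat about loops contributing two edge-ends, is handled correctly, and the final cancellation yields $2\pi\chi(K)$ as required. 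This is essentially the argument one finds in the cited references, so there is nothing to flag.
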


\subsection*{Assignment of a weight function} Let $P$ be a presentation satisfying condition $\TTMetric$ or $\TTMetricStrict$. Given a reduced diagram $f\colon M\to K_P$ over $P$, we define the following weight function in $M$.
The weight of a corner $c$ at an interior vertex $v$ is $w(c)=\pi-\frac{\cornerFirstEdgeLength{c} + \cornerSecondEdgeLength{c}}{\cornerRelationLength{c}}\pi$ (recall that there are no interior vertices of degree $2$).
The weight of a corner $c$ at a vertex $v\in\partial M$ of degree $2$ is $w(c)=\pi$. If $c$ is a corner at a vertex $v\in\partial M$ of degree greater than $2$, we define $\cornerFirstEdgeLength{c}$ and $\cornerSecondEdgeLength{c}$ similarly as we did with interior vertices (the lengths of the incident edges obtained if we remove the vertices of degree $2$) and  $w(c)=\pi-\frac{\cornerFirstEdgeLength{c} + \cornerSecondEdgeLength{c}}{\cornerRelationLength{c}}\pi$.

With this assignment, the curvature of the faces of $M$ is $0$, and the curvature of the interior vertices is non-positive if $P$ satisfies condition $\TTMetric$, and strictly negative if $P$ satisfies condition $\TTMetricStrict$.

We will show that presentations satisfying condition $\TTMetric$ and without proper powers are DR and that finitely presented $\TTMetricStrict-C(3)$-groups are hyperbolic. Given a $2$-complex $M$, we denote by $\VV(M),\EE(M)$ and $\FF(M)$ the number of vertices, edges and faces of $M$ respectively.

\begin{thm}\label{TheoremDR}
\statementTheoremDR
\end{thm}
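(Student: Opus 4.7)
The plan is to argue by contradiction using the weight function defined in the preceding subsection together with the Combinatorial Gauss--Bonnet Theorem. Suppose $P$ is not DR; then there is a reduced spherical diagram $\varphi\colon M\to K_P$ where $M$ is a $2$-sphere, so $\chi(M)=2$. After collapsing all interior vertices of degree $2$ (which preserves reducedness), we may assume every vertex of $M$ has degree at least $3$. Since $M$ is closed, every vertex is interior and the link of each vertex is a circle, so $\chi(\lk_v)=0$.

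Next, I would verify the two local curvature computations. For a face $f$ corresponding to a relator $r$, each edge $e\subseteq\partial f$ is counted exactly twice by the sum $\sum_{c\in f}(\cornerFirstEdgeLength{c}+\cornerSecondEdgeLength{c})$ (once as the outgoing edge of one corner and once as the incoming edge of the next), so this sum equals $2\ell(r)=2\cornerRelationLength{c}$. Substituting,
$$\kappa(f)=2\pi-\pi\ell(\partial f)+\sum_{c\in f}\left(\pi-\frac{\cornerFirstEdgeLength{c}+\cornerSecondEdgeLength{c}}{\cornerRelationLength{c}}\pi\right)=2\pi-\pi\cdot\frac{2\ell(r)}{\ell(r)}=0.$$
For each (interior) vertex $v$,
$$\kappa(v)=2\pi-\pi\chi(\lk_v)-\sum_{c\ni v}w(c)=2\pi-\sum_{c\ni v}\left(\pi-\frac{\cornerFirstEdgeLength{c}+\cornerSecondEdgeLength{c}}{\cornerRelationLength{c}}\pi\right)=\pi\bigl(d'_F(v)-(d(v)-2)\bigr)\leq 0,$$
where the final inequality is precisely condition $\TTMetric$.

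Now I would invoke Gauss--Bonnet: the total curvature must equal $2\pi\chi(M)=4\pi>0$, yet the left-hand side is a sum of terms each of which is $\leq 0$. This contradiction rules out the existence of reduced spherical diagrams, which is exactly what DR asserts (the hypothesis of no proper powers is what makes the statement ``$P$ is DR'' meaningful in the first place, and together with our standing assumption that no relator is a cyclic permutation of another or its inverse it guarantees the standard form of the diagram we analyzed). The main subtlety is confirming the clean local structure of $M$ (all vertices interior, of degree $\geq 3$, with circular links) so that the curvature formulas apply uniformly; once these standard facts about reduced spherical diagrams are invoked, the rest is a short computation.
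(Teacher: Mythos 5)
Your proof is correct and follows essentially the same approach as the paper: both rule out reduced spherical diagrams by showing that the total combinatorial curvature would be nonpositive while $2\pi\chi(S^2)=4\pi>0$. The only difference is cosmetic — you invoke the Combinatorial Gauss--Bonnet Theorem together with the weight function set up just before the theorem, while the paper's proof inlines the same computation as a direct manipulation of $\VV(M)-\EE(M)+\FF(M)=2$ using the identities $\EE(M)=\tfrac{1}{2}\sum_v d(v)$ and $\FF(M)=\tfrac{1}{2}\sum_v d'_F(v)$, which is exactly the unwound version of your $\kappa(v)=\pi\bigl(2-d(v)+d'_F(v)\bigr)$ and $\kappa(f)=0$.
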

\begin{proof}
Since $P$ has no proper powers, our notion of reduced spherical diagram over $P$ coincides with that of \cite{Gersten}.
Therefore, in order to prove that $P$ is DR, we only have to verify that there are no reduced spherical diagrams over $P$.
Suppose $\varphi \colon M\to K_P$ is a reduced spherical diagram. We have the following identities:
\begin{align*}
\EE(M) &= \frac{1}{2}\sum_{v\in\vertices{M}}d(v),\\
\FF(M) &= \frac{1}{2}\sum_{v\in \vertices{M}}d'_F(v).
\end{align*}
The first one is clear, and the second one comes from the fact that in the right hand side of the second equality we are summing two times the length of each relator, divided by the length of each relator.
That is, we are summing $2$ for each face. Then
$$2 = \VV(M)-\EE(M)+\FF(M) = \VV(M)-\frac{1}{2}\sum_{v\in \vertices{M}}d(v) + \frac{1}{2}\sum_{v\in \vertices{M}}d'_F(v) \leq 0,$$
where the last inequality holds because all the vertices in the sphere are interior vertices.
This is a contradiction, and therefore $P$ is diagrammatically reducible.
\end{proof}

\begin{thm}\label{TheoremHyperbolic}
\statementTheoremHyperbolic
\end{thm}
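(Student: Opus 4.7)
My plan is to prove hyperbolicity by establishing a linear isoperimetric inequality for $P$; by Gromov's well-known theorem, this suffices for $G$ to be hyperbolic. Concretely, the goal is to produce a constant $K=K(P)>0$ such that every reduced disk diagram $\varphi\colon M\to K_P$ over $P$ with boundary of length $n$ satisfies $\FF(M)\leq Kn$.

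The main tool is the combinatorial Gauss--Bonnet theorem applied to the angled complex built in Section~\ref{sectionNonPositiveCurvature}. By the computation there, the weight function makes $\kappa(f)=0$ for every face. Combining the strict condition $\TTMetricStrict$ with the finiteness of $P$, the algorithmic analysis of Section~\ref{SectionAlgorithm} produces a single constant $N>0$, independent of the diagram, such that $\kappa(v)\leq -N$ at every interior vertex of every reduced diagram. For a boundary vertex $v$, the link $\lk_v$ has Euler characteristic at least one, and each corner weight $w(c)=\pi-\pi(\cornerFirstEdgeLength{c}+\cornerSecondEdgeLength{c})/\cornerRelationLength{c}$ is nonnegative, because $\cornerFirstEdgeLength{c}+\cornerSecondEdgeLength{c}$ is the length of a subpath of the relator and hence at most $\cornerRelationLength{c}$; consequently $\kappa(v)\leq \pi$. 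Applying Gauss--Bonnet with $\chi(M)=1$, and writing $V_i,V_b$ for the numbers of interior and boundary vertices of $M$,
\[
2\pi \;=\; \sum_{v} \kappa(v) \;\leq\; -N\, V_i + \pi\, V_b \;\leq\; -N\, V_i + \pi n,
\]
so $V_i \leq \pi n/N$.

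To pass from this bound on $V_i$ to a linear bound on $\FF(M)$, I would invoke condition $C(3)$ together with Euler's formula for the disk. In the reduced diagram, $C(3)$ implies that every face whose edges are all interior has at least three edges on its boundary, so any face of size at most two must carry a boundary edge; since distinct such faces occupy distinct boundary edges, their number is at most $n$. Writing $E_i$ for the number of interior edges in the reduced diagram, the identity $\sum_f \ell_f = 2E_i + n$ combined with $\sum_f \ell_f \geq 3\FF(M) - n$ and Euler's relation $\FF(M)=1+E_i-V_i$ yields $\FF(M)\leq 2V_i+2n$. The bound on $V_i$ then gives $\FF(M)\leq (2+2\pi/N)\,n$, the desired linear isoperimetric inequality, from which hyperbolicity follows.

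The step I expect to require the most care is invoking the uniform curvature bound $-N$ from Section~\ref{SectionAlgorithm}, which is the conceptual heart of the argument and rests on the independent combinatorial analysis of the finite weighted graph $\graphFromPresentation{P}$. A secondary concern is verifying that the counting goes through in full generality for singular disk diagrams, in which $\partial M$ need not be an embedded circle; the estimates $V_b\leq n$, at most one bigon per boundary edge, and $\chi(\lk_v)\geq 1$ at singular boundary vertices all remain valid, but should be checked explicitly so that the Gauss--Bonnet calculation is not disturbed by pinch points on the boundary.
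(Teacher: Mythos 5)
Your argument is correct and follows essentially the same route as the paper's: assign the weight function of Section~\ref{sectionNonPositiveCurvature} so that faces have zero curvature, invoke Corollary~\ref{coroN} to get a uniform negative upper bound $-N$ on interior vertex curvature, apply Gauss--Bonnet to bound the number of interior vertices linearly in $\ell(\partial M)$, and then use $C(3)$ to convert a vertex bound into a face bound. Your closing computation via Euler's formula merely spells out the step the paper summarizes as ``the number of faces can be linearly bounded by the number of vertices,'' and the caveats you flag about singular boundary and $\chi(\lk_v)\geq 1$ are exactly the right ones to check.
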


\begin{proof}
We show that a finite presentation $P$ satisfying conditions $\TTMetricStrict$ and $C(3)$ has a linear isoperimetric inequality.

Let $\varphi \colon M\to K_P$ be a reduced disk diagram. We assign weights to the corners in $M$ as we did before. Then, by the Combinatorial Gauss--Bonnet Theorem,
$$2\pi \hspace{4pt} = \hspace{-8pt}\sum_{v\in \vertices{M}} \kappa(v) + \sum_{f\in\faces{M}}\ \kappa(f) \hspace{4pt}=\hspace{-8pt} \sum_{v\in\vertices{M}} \kappa(v).$$

Since $P$ satisfies $\TTMetricStrict$, then $\kappa(v)<0$ for every interior vertex $v$, and since $P$ is finite, by Corollary \ref{coroN} there is a constant $N<0$, which is independent of the diagram, such that $\kappa(v) \leq N$ for every interior vertex $v$. Then
\begin{align*}
2\pi &\leq \VV^{\circ}(M)N + \sum_{v\in\vertices{\partial M}}\kappa (v) \\
&\leq \VV^{\circ}(M)N + \VV(\partial M)\pi \\
&= \VV(M) N + \ell(\partial M)(\pi-N),
\end{align*}
where $\VV^{\circ}(M)$ denotes the number of interior vertices of $M$, and $\ell(\partial M)$ is the length of the boundary.
Then,
$$-\VV(M) N \leq \ell(\partial M)(\pi-N)-2\pi$$
and therefore
$$\VV(M) \leq \ell(\partial M)\frac{\pi-N}{-N}+\frac{2\pi}{N}.$$

Now, since $P$ satisfies condition $C(3)$, the number of faces can be linearly bounded by the number of vertices in the diagram.
Consequently, the number of faces in the diagram is linearly bounded by the length of its boundary.
\end{proof}

\section{Quadratic Dehn function and conjugacy problem}\label{SectionQuadraticDehn}

In this section we will show that a finitely presented group which admits a presentation $P$ satisfying conditions $\TTMetric$ and $C'(\frac{1}{2})$ and such that all relators of $P$ have the same length $r$, has a quadratic Dehn function and solvable conjugacy problem.

Let $\varphi \colon M\to K_P$ be a diagram over $P$.
The {\em boundary layer} $L$ of $M$ consists of every vertex in the boundary of $M$, every edge incident to a vertex in the boundary, and every open face with a vertex in the boundary.
Note that $L$ is usually not a combinatorial complex.
Let $M_1=M\backslash L$ be the complement of the boundary layer.
Note that $M_1$ is a subcomplex of $M$.
The following lemma will be used to prove the main result of this section.

\begin{lem}\label{Lemmaboundarylayer}
Let $P$ be a presentation satisfying conditions $\TTMetric-C'(\frac{1}{2})$ and such that all its relators have length $r$, and let $\varphi \colon M\to K_P$ be an annular or disk diagram over $P$. Then
$$\VV(\partial M_1) \leq \VV(\partial M)-r\chi(M).$$
\end{lem}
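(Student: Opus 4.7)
Let $N:=\overline{L}$ be the closure of the boundary layer in $M$; its vertex set is $\VV(\partial M)\cup\VV(\partial M_1)$ and its faces are exactly those faces of $M$ meeting $\partial M$. The plan is to apply the Combinatorial Gauss--Bonnet Theorem to $N$ using the weight function of Section~\ref{sectionNonPositiveCurvature} (so face curvatures vanish and interior-vertex curvatures are $\leq 0$ by $\TTMetric$), then combine the resulting identity with an Euler-characteristic relation between $M$, $N$, and $M_1$.

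First I would establish the Euler-characteristic bookkeeping. From the decomposition $M = N\cup M_1$, inclusion--exclusion yields $\chi(M) = \chi(N) + \chi(M_1) - \chi(N\cap M_1)$. The intersection $N\cap M_1$ is a $1$-complex (its vertices are exactly $\VV(\partial M_1)$ and its edges are the edges of $M$ with both endpoints interior that bound some face in $L$), so its Euler characteristic can be read off from the link structure at the vertices of $\VV(\partial M_1)$ via an auxiliary Gauss--Bonnet computation on $N$ with the trivial weight $w\equiv 0$.

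Second, I would analyze vertex curvatures in $N$ with the standard weights. For $v\in\VV(\partial M)$, every face at $v$ lies in $L$, so $\lk_N(v)=\lk_M(v)$ and $\kappa_N(v)=\kappa_M(v)$; the hypothesis $C'(\tfrac12)$ forces every piece to have length $<r/2$, and substituting this into the explicit formula for $\kappa_M(v)$ gives the strict bound $\kappa_M(v) < 2\pi/r$. For $v\in\VV(\partial M_1)$, the link $\lk_N(v)$ is a disjoint union of $s(v)\geq 1$ arcs (one per consecutive run of $L$-faces around $v$), so $\chi(\lk_N(v))=s(v)$; the condition $\TTMetric$, applied to the corner--weight sum $\sum_{c\ni v,\,c\in L}w(c)$, produces the needed estimate on $\kappa_N(v)$.

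Third, I would plug these bounds into
\[
2\pi\chi(N) \;=\; \sum_{v\in\VV(\partial M)}\kappa_M(v) + \sum_{v\in\VV(\partial M_1)}\kappa_N(v),
\]
together with the Euler-characteristic identity from the first step, to eliminate $\chi(N)$ and $\chi(N\cap M_1)$. Rearranging the resulting inequality, the factor $r$ emerges from multiplying through by $r/2$ to clear the $2\pi/r$ bound, giving the desired inequality $\VV(\partial M_1)\leq\VV(\partial M)-r\chi(M)$.

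The main obstacle is the second step at vertices $v\in\VV(\partial M_1)$: one must carefully balance the negative contribution $-\pi s(v)$ coming from the link against the (possibly large) sum of corner weights restricted to $L$, using $\TTMetric$ to bound the latter and $C'(\tfrac12)$ to control individual piece lengths. A case analysis on whether all faces at $v$ lie in $L$ (where $\kappa_N(v)=\kappa_M(v)\leq 0$ directly) versus the mixed case ($s(v)\geq 1$ arcs) will be needed to produce a uniform clean bound.
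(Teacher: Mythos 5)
Your proposal shares the high-level strategy with the paper — apply Gauss--Bonnet to the boundary layer with the weight function of Section~\ref{sectionNonPositiveCurvature} — but it leaves the central step explicitly unresolved, and that step is exactly where the paper's actual argument differs from yours. You want to bound $\kappa_N(v)$ at $v\in\VV(\partial M_1)$ by ``applying $\TTMetric$ to the restricted corner sum $\sum_{c\ni v,\,c\in L}w(c)$,'' but $\TTMetric$ only controls the \emph{full} corner sum at $v$ in $M$. The corners at $v$ not in $L$ have weights $w(c)=\pi-\tfrac{\cornerFirstEdgeLength{c}+\cornerSecondEdgeLength{c}}{r}\pi$ that can be arbitrarily close to $\pi$, so there is no clean way to extract the needed bound on the $L$-restricted sum from $\TTMetric$ and $C'(\tfrac12)$ by a case analysis on $s(v)$. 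You flag this as ``the main obstacle,'' but the whole lemma hinges on it, so this is a genuine gap and not a routine detail to fill.

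The paper avoids the obstacle via a different construction. Instead of the subcomplex $N=\overline{L}$, it builds a complex $B$ from the \emph{disjoint} union of the closed cells of the boundary layer, identified only along cells that are themselves in the boundary layer. This ``cut-open'' $B$ is not a subcomplex of $M$: vertices of $\partial M_1$ may split into several copies in $B$, each with an arc as its link. The payoff is that $B$ is planar (so $\chi(B)\leq\chi(M)$), and the key inequality $\sum_{v\in V_2}\kappa_B(v)\leq 0$ follows from applying Gauss--Bonnet \emph{twice}: once to $M$ to get $2\pi\chi(M)\leq\sum_{V_1}\kappa_M(v)$ (here $\TTMetric$ is used only at interior vertices of $M$, exactly where it applies), and once to $B$ to get $\sum_{V_1}\kappa_M(v)+\sum_{V_2}\kappa_B(v)=2\pi\chi(B)\leq 2\pi\chi(M)$. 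No direct curvature bound at $\partial M_1$ is ever needed. Your inclusion--exclusion $\chi(M)=\chi(N)+\chi(M_1)-\chi(N\cap M_1)$ would force you to control $\chi(N\cap M_1)$, which can be a complicated $1$-complex; the paper never confronts this.

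Finally, $C'(\tfrac12)$ is used differently: the paper uses it to reduce to diagrams where every boundary $2$-cell has at least two non-boundary edges (a reduction you omit, and one needed for the final combinatorial identity on $B$), not to obtain a pointwise bound $\kappa_M(v)<2\pi/r$. Your pointwise bound is actually correct for a non-singular diagram where boundary vertices have exactly two boundary edges of length~$1$, but it is not what the argument needs; the paper instead computes the total curvature exactly as $\tfrac{2\VV(\partial M)-2\VV(\partial M_1)}{r}\pi$ via the weight identity. You would need to supply the missing $\TTMetric$-bound at $\partial M_1$ vertices, the Euler-characteristic bookkeeping for $N\cap M_1$, and the singular-diagram reduction before this plan becomes a proof.
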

\begin{proof}
We had previously removed interior vertices of degree $2$ from the diagrams.
We subdivide the boundary of $M_1$ reintroducing the vertices of degree $2$.
We still denote this diagram by $M$.

For the vertices of $M$ of degree greater than $2$, we assign weights to the corners as before, and for vertices of degree $2$, both weights are equal to $\pi$. With this assignment, every face has curvature $0$ and all interior vertices have non-positive curvature, since $P$ satisfies $\TTMetric$.

In what follows, we can assume without loss of generality that $M$ is non-singular, since we are going to bound the length of the boundary of $M_1$ in terms of the length of the boundary of $M$.
Since $P$ is $C'(\frac{1}{2})$ we may assume that each boundary $2$-cell $f$ has at least two edges which are not on the boundary of $M$, for otherwise we can remove $f$ decreasing the length of the boundary and without changing $M_1$.
In particular this reduction allows us to assume that $M_1\neq \emptyset$.

Let $B$ be the following complex. We take the disjoint union of the $0$-cells, $1$-cells and $2$-cells (now closed) of the boundary layer of $M$ and we identify the boundaries of the closed $2$-cells only in the vertices and edges of the boundary layer of $M$.
We omit vertices of degree $2$ in the cell structure of $B$.
Note that $\ell(\partial B) = \ell (\partial M ) +\ell(\partial M_1)$.
If $M$ is a disk, $B$ is a planar and connected combinatorial complex, so its Euler characteristic is less than or equal to $1=\chi(M)$.
If $M$ is an annulus, $B$ may have more than one connected component, but none of them would be a disk, since they all have a disconnected complement.
Therefore its Euler characteristic is less than or equal to $0=\chi(M)$.

We separate its vertices into two sets: $V_1$ will denote the set of vertices of $B$ that are in the boundary of $M$, and $V_2$ the set of remaining vertices of $B$. 
Since $P$ satisfies condition $\TTMetric$, by Gauss--Bonnet we have
$$2\pi\chi(M) \leq \sum_{v\in V_1}\kappa(v).$$
Also by Gauss--Bonnet, we have
$$\sum_{v\in \vertices{B}}\kappa(v) = 2\pi\chi(B).$$
Therefore
$$\sum_{v\in V_2}\kappa(v) \leq 0.$$
Now since each boundary $2$-cell has at least two edges which are not on the boundary of $M$ we have
$$\VV_1+ \sum_{v\in V_2}\sum_{c\ni v}1  =\VV(B)+\EE(B)-\ell(\partial B)  = \VV_2 +\sum_{v\in V_1}\sum_{c\ni v} 1 - \VV^\circ(B).$$
Now putting everything together
\begin{align*}
2\pi\chi(M) &\leq \sum_{v\in V_1}\kappa(v) - \sum_{v\in V_2}\kappa(v)\\
&= \sum_{v\in V_1}\left(\pi-\sum_{c \ni v}\left(\pi-\frac{\cornerFirstEdgeLength{c}+\cornerSecondEdgeLength{c}}{r}\pi\right)\right) - \sum_{v\in V_2}\left(\pi-\sum_{c \ni v}\left(\pi-\frac{\cornerFirstEdgeLength{c}+\cornerSecondEdgeLength{c}}{r}\pi\right)\right)\\
& =  \sum_{v\in V_1}\sum_{c \ni v}\frac{\cornerFirstEdgeLength{c}+\cornerSecondEdgeLength{c}}{r}\pi - \sum_{v\in V_2}\sum_{c \ni v}\frac{\cornerFirstEdgeLength{c}+\cornerSecondEdgeLength{c}}{r}\pi \\
&= \frac{2\VV(\partial M)-2\VV(\partial M_1)}{r}\pi.
\end{align*}
It follows that
$$\VV(\partial M_1) \leq \VV(\partial M)-r\chi(M).$$
\end{proof}

\begin{thm}\label{TheoremQuadraticDehn}
\statementTheoremQuadraticDehn
\end{thm}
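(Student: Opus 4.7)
The plan is to use Lemma~\ref{Lemmaboundarylayer} as the key ingredient in two regimes: on disk diagrams (Euler characteristic $1$) to obtain the quadratic area bound, and on annular diagrams (Euler characteristic $0$) to control the length of a conjugator.

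\emph{Quadratic Dehn function.} Let $w$ be a word of length $n$ representing the trivial element, and fix a reduced minimal-area disk diagram $\varphi\colon M\to K_P$ for $w$. Peel boundary layers iteratively: set $M_0=M$ and let $M_{k+1}$ be obtained from $M_k$ by removing its boundary layer, stopping at $M_K=\emptyset$. Writing $n_k=\ell(\partial M_k)$ and $F_k$ for the number of faces removed at step $k$, Lemma~\ref{Lemmaboundarylayer} applied to each disk component (where $\chi=1$) gives $n_{k+1}\le n_k-r$, so $n_k\le n-kr$ and $K\le\lceil n/r\rceil$. The $C'(\tfrac12)$ hypothesis, already exploited in the proof of the lemma to force every boundary $2$-cell to have at least two interior edges, also yields (via a standard edge-incidence count, after the auxiliary removal of spur faces touching the boundary only at a vertex) a bound of the form $F_k\le C\cdot n_k$ with $C=C(r)$. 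Summing,
\[
\mathrm{Area}(w)\;=\;\sum_{k=0}^{K-1}F_k \;\le\; C\sum_{k=0}^{\lfloor n/r\rfloor}(n-kr)\;=\;O(n^2),
\]
which is the claimed quadratic Dehn function; in particular the word problem is solvable.

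\emph{Conjugacy problem.} Assume $P$ is finite and let $u,v$ be cyclically reduced words representing conjugate elements. Fix a reduced annular diagram $\varphi\colon M\to K_P$ of minimal area with inner and outer boundaries labelled $u$ and $v$. A conjugator realized inside $M$ corresponds to a combinatorial path between the two components of $\partial M$, whose minimal length equals the combinatorial distance $d$ between them. The aim is a computable bound $d\le D(|u|+|v|)$: once established, one enumerates the finitely many words of length at most $D(|u|+|v|)$ over the finite alphabet $X$ and tests each, using the already-solved word problem, as a candidate conjugator. To bound $d$, peel boundary layers starting from the outer component; Lemma~\ref{Lemmaboundarylayer} with $\chi(M)=0$ shows the total perimeter never grows, so each successive ring has perimeter at most $|u|+|v|$ and contributes $O(|u|+|v|)$ faces. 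If $d$ exceeded a constant multiple of $|u|+|v|$, a pigeonhole argument among the intermediate boundary curves $\partial M_k$, whose labels are words of bounded length over the finite alphabet $X$, would produce two nested loops carrying the same cyclic label; cutting the annulus along these matching loops and regluing yields a strictly smaller reduced annular diagram with the same boundary labels, contradicting minimality. Proposition~\ref{propTrivialWords} is the mechanism that guarantees the regluing does not introduce reducible pairs.

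\emph{Main obstacle.} The delicate step is the annular case. For disk diagrams the Euler characteristic $\chi=1$ forces strict perimeter shrinking under peeling and hence bounds both depth and area essentially for free; for annuli $\chi=0$ and Lemma~\ref{Lemmaboundarylayer} only prevents growth of the perimeter. Converting this non-growth into an effective diameter bound forces the pigeonhole-and-regluing argument above, and verifying that the resulting smaller diagram is still reduced, so that minimality is genuinely contradicted, is where the combined force of conditions $\TTMetric$, $C'(\tfrac12)$ and Proposition~\ref{propTrivialWords} is essential.
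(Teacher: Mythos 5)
Your Dehn-function half is essentially the paper's argument. Both peel boundary layers from a reduced disk diagram and apply Lemma~\ref{Lemmaboundarylayer} with $\chi=1$; the paper packages this as the induction $\VV(M)\le\frac{1}{r}\VV(\partial M)^2$ and then uses $C'(\frac12)$ (every face has at least three sides) together with Euler's formula to bound faces by vertices globally, whereas you sum face-counts layer by layer. The per-layer estimate $F_k\le C\,n_k$ that you invoke needs the same kind of incidence bookkeeping the paper does once for the whole diagram, so this is a reformulation rather than a shortcut.

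The conjugacy half deviates from the paper and has a genuine gap. The paper follows Lyndon--Schupp (Section V.7): it introduces the decidable relation $\bcr$ (conjugation by a word of length $<r$), passes to the transitive closure $\tcbcr$ on the finite set $W$ of words of length $\le|u|+|v|$, peels boundary layers from an annular diagram, and---crucially---uses Schupp's lemma on \emph{boundary linking pairs} to decide when the peeling stops and to produce the final short bridging conjugator at that stage. Your pigeonhole-and-reglue scheme never confronts the fact that after removing a boundary layer the complement $M\setminus L$ need not be an annulus: it may be empty, disconnected, or consist of several disk ``gaps'' together with at most one annular component, and once a boundary linking pair appears there is no further nested loop to compare. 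This is exactly what Schupp's lemma (quoted in the paper) handles, and your argument is silent on it. Two further problems: the threshold ``a constant multiple of $|u|+|v|$'' for the pigeonhole is incorrect, since the number of candidate boundary labels is exponential in $|u|+|v|$ over a finite alphabet (the resulting exponential bound still yields decidability, but the linear claim is a misstatement); and the appeal to Proposition~\ref{propTrivialWords} to guarantee that the reglued diagram remains reduced is unexplained---that proposition bounds the length of null-homotopic words and has no evident bearing on cancelling pairs created by the identification. The cleaner observation, if one does pursue a reglue argument, is that further reductions can only decrease area, so minimality of area is contradicted whether or not the reglued diagram is reduced.
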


\begin{proof}[Proof of the quadratic Dehn function]
Let $\varphi \colon M\to K_P$ be a reduced disk diagram. We show that $\VV(M) \leq \frac{1}{r}\VV(\partial M)^2$ by induction in the number of interior vertices of $M$. We have
\begin{align*}
\VV(M) &= \VV(M_1)+\VV(\partial M)\\ 
& \leq \frac{1}{r}\VV(\partial M_1)^2+\VV(\partial M) \\
&\leq \frac{1}{r}(\VV(\partial M)-r)^2+\VV(\partial M) \\
&= \frac{1}{r}\VV(\partial M)^2 - 2\VV(\partial M) + r + \VV(\partial M) \\
&\leq \frac{1}{r}\VV(\partial M)^2.
\end{align*}
The first inequality follows by induction, and the second one follows from Lemma \ref{Lemmaboundarylayer}.
Finally, since $P$ satisfies $C'(\frac{1}{2})$, each face of $M$ has at least three sides. Then we can bound the number of faces of $M$ by the number of vertices of $M$, obtaining the desired quadratic isoperimetric inequality.
\end{proof}

It follows that the finitely generated groups which admit presentations satisfying the hypotheses of Theorem \ref{TheoremQuadraticDehn} have solvable word problem. This will be used to prove that they also have solvable conjugacy problem. 

We attack the conjugacy problem following the strategy of \cite[Section V.7]{LyndonSchupp}).

\begin{remark}\label{RemarkConjugacy}
Let $P = \langle X\ |\ R \rangle$ be a finite presentation of a group $G$ with solvable word problem. Suppose that all the relators have the same length $r$.
Let $w_1$ and $w_2$ be words in the free group $F(X)$.
We write $w_1 \bcr w_2$ if there exists a word $b$ in $F(X)$ with $|b|<r$ such that $bw_1b^{-1}w_2^{-1}=1$ in $G$.
Here $|b|$ denotes the length of the word $b$. Since the word problem is solvable, the relation $\bcr$ is decidable.
Now let $u$ and $v$ be cyclically reduced words in $F(X)$ and let $d=|u|+|v|$. Take $W = \{w\in F(X), |w|\leq d\}$.
Note that $W$ is finite since $X$ is finite. Note also that the set $W$ depends on the lengths of $u$ and $v$ and that $u,v\in W$.
We write $u \tcbcr v$ if there exist words $w_1,\ldots ,w_k$ in $W$ such that $u \bcr w_1 \bcr \ldots \bcr w_k \bcr v$.
Equivalently, $\tcbcr $ is the transitive closure in $W$ of the relation $\bcr$.
Note that this relation is also decidable since $W$ is finite.
In order to prove that the conjugacy problem is solvable it suffices to prove that if two words $u,v\in F(X)$ are conjugate in $G$, then $u\tcbcr v$.
\end{remark}

We will also use the following result of Schupp \cite[Section V.7]{LyndonSchupp}. Let $A$ be an annular diagram and $L$ its boundary layer.
The diagram $A_1=A\backslash L$ (the complement of $L$ in $A$) may be disconnected, but it has at most one annular component.
A simply connected component of $A_1$ is called a {\em gap}.
Let $K_1,\ldots,K_n$ be the gaps. Then $H=A\backslash(L\cup\bigcup_{i=1}^n K_i)$ is the annular component of $A_1$, assuming there is any.
Let $\sigma$ and $\tau$ be the outer and inner boundaries of $H$.
A pair $(D_1,D_2)$ of faces (not necessarily distinct) in $A$ is called a {\em boundary linking pair} if $\sigma \cap \partial D_1 \neq \emptyset$, $\partial D_1 \cap \partial D_2 \neq \emptyset$, and $\partial D_2 \cap \tau \neq \emptyset$.

\begin{lem}[Schupp]
Let $A$ be an annular diagram having at least one region, and let $H$ be the diagram obtained by removing its boundary layer and its gaps. If there are no boundary linking pairs, $H$ is an annular diagram.
\end{lem}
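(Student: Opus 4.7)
The plan is to prove the contrapositive: if $H$ is not an annular diagram, then $A$ contains a boundary linking pair. Since $A_1 = A \setminus L$ has at most one annular component (which is $H$ when it exists), the hypothesis that $H$ is not annular means that every component of $A_1$ is a gap. Topologically, this is equivalent to the statement that $L$ separates the outer boundary $\partial_+ A$ from the inner boundary $\partial_- A$ inside the annulus $A$.

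First I would decompose $L = L_+ \cup L_-$, where $L_+$ is the closed subcomplex consisting of all $2$-cells, together with their edges and vertices, incident to $\partial_+ A$, and $L_-$ is defined analogously for $\partial_- A$. Each $L_\pm$ is a closed, connected neighborhood of its boundary circle, and the frontiers $\sigma$ and $\tau$ in the statement correspond to the inner topological boundaries of $L_+$ and $L_-$ respectively; these agree with the outer and inner boundaries of $H$ when $H$ is annular. The key topological dichotomy is: $A_1$ has an annular component if and only if $L_+$ and $L_-$ are disjoint as subsets of $A$. The ``if'' direction uses that $L_\pm$ is a regular neighborhood of its boundary circle, so $(A\setminus L_+)\cap(A\setminus L_-)$ is an annular region bounded by $\sigma$ and $\tau$; the ``only if'' follows because a shared cell pinches the would-be annulus and destroys its $\pi_1$.

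Under the contrapositive hypothesis, $L_+ \cap L_- \neq \emptyset$, and a boundary linking pair arises by case analysis on the dimension of a shared cell. If a $2$-cell $D$ lies in $L_+ \cap L_-$, then $D$ has vertices on both boundaries of $A$, its boundary meets both $\sigma$ and $\tau$, and $(D, D)$ is a boundary linking pair. Otherwise, the intersection consists only of cells of dimension $\leq 1$; pick such a shared cell $c$ (an edge or a vertex) and choose faces $D_1 \in L_+$, $D_2 \in L_-$ both having $c$ in their closure. Then $\partial D_1 \cap \partial D_2 \supseteq c \neq \emptyset$, while $\partial D_1 \cap \sigma$ and $\partial D_2 \cap \tau$ are both nonempty because each $D_i$ is incident to the inner frontier of its layer.

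The main obstacle is the topological dichotomy of the second paragraph: one must verify that $L_+$ really behaves as a collar of $\partial_+ A$, i.e., has the homotopy type of a circle, even when its combinatorial thickness varies unpredictably. The key observation is that $L_+$ contains the open star of every vertex of $\partial_+ A$, so it is a genuine simplicial neighborhood; a regular-neighborhood or collapsing argument (standard for surface subcomplexes) then yields the annular structure of its complement and the decomposition of $\sigma\cup\tau$ into simple closed curves, with exactly one curve on each side surrounding the corresponding boundary of $A$.
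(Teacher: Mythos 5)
The paper does not prove this lemma --- it cites it directly from Lyndon--Schupp, Section V.7 --- so there is no in-paper argument to compare against, and your attempt has to stand on its own.

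Your contrapositive strategy collides with a circularity in the definitions: $\sigma$ and $\tau$ are defined as the boundary cycles of $H$, but under your hypothesis (``$H$ is not annular'') $H$ is empty and $\sigma$, $\tau$ are simply undefined, so the conclusion ``a boundary linking pair exists'' has no content. Your repair --- reinterpreting $\sigma$ and $\tau$ as the inner frontiers of $L_+$ and $L_-$ --- deviates from the stated definitions and is itself not obviously consistent with them: gaps can sit between $L_+$ and $H$, so the frontier of $L_+$ need not coincide with $\sigma$ even when $H$ is annular, and when $H$ is empty that frontier need not be a cycle at all. Even granting the reinterpretation, the final step is not tight: from a shared cell $c\in L_+\cap L_-$ you pick faces $D_1\in L_+$ and $D_2\in L_-$ containing $c$ and assert that each $D_i$ ``is incident to the inner frontier of its layer.'' That does not follow --- a $2$-cell of $L_+$ can touch $\partial_+A$ at a single vertex while the remainder of its closed boundary lies inside $\overline{L_+}$ (behind other boundary-layer cells), in which case it never meets $\sigma$. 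Lastly, the ``topological dichotomy'' you flag as the main obstacle is stated as an equivalence but never proved; only the direction $L_+\cap L_-=\emptyset\Rightarrow A_1$ has an annular component is actually used, and that is precisely the hard content. The regular-neighborhood heuristic is not enough: $\overline{L_\pm}$ is a closed star neighborhood of a boundary circle, not a regular neighborhood, and can have spikes reaching arbitrarily deep into $A$; one would need to produce, concretely, a cycle in $A\setminus(\overline{L_+}\cup\overline{L_-})$ that is essential in $A$ before the argument closes.
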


\begin{proof}[Proof of the conjugacy problem]
Take $u,v\in F(X)$ cyclically reduced and suppose that they are conjugate in $G$.
Let $d=|u|+|v|$. By Remark \ref{RemarkConjugacy}, we only have to prove that $u\tcbcr  v$.
Let $A$ be an annular diagram with $u$ and $v^{-1}$ as inner and outer boundaries.
Construct the diagrams $A=H_0, H_1,\ldots, H_k$, where $H_{i+1}$ is obtained from $H_i$ by removing its boundary layer and its gaps, and let $H_k$ be the first of such diagrams with a linking pair. 
By Lemma \ref{Lemmaboundarylayer}, $\ell(\partial H_{i+1})\leq \ell(\partial H_i)$ for each $0 \leq i \leq k-1$.
Therefore, $\ell(\partial H_i) \leq d$ for every  $i$ (and so the boundary labels of $\partial H_i$ are in the set $W$).

Let $\sigma_i$ and $\tau_i$ be the outer and inner boundaries of $H_i$ respectively.
Let $S_i$ be the subdiagram of $M$ consisting of $\sigma_i$, $\sigma_{i+1}$ and all the cells of $M$ between these two paths.
Define $T_i$ in the same manner with respect to $\tau_i$ and $\tau_{i+1}$.
It is clear that any boundary face of $S_i$ intersects both boundaries of $S_i$.
So there is a path $\gamma_i$ from $\sigma_i$ to $\sigma_{i+1}$ with a label of length less than or equal to $r$.
Let $s_i$ and $s_{i+1}^{-1}$ be the labels of $\sigma_i$ and $\sigma_{i+1}$ starting at a given vertex.
Then $s_i \bcr s_{i+1}$.
Analogously, we have $t_i \bcr t_{i+1}$ where $t_i^{-1}$ and $t_{i+1}$ are the labels of $T_i$.

The last annulus $H_k$ has a boundary linking pair $(D_1, D_2)$.
We have vertices $v_0 \in \sigma_k \cap \partial D_1$, $v_1 \in \partial D_1 \cap \partial D_2$, and $v_2 \in \partial D_2 \cap \tau_k$.
Therefore there are paths $\beta_1$ and $\beta_2$ from $v_0$ to $v_1$ and from $v_1$ to $v_2$ labeled by words $b_1$ and $b_2$ of length smaller than or equal to $\frac{r}{2}$.
Let $\beta = \beta_1\beta_2$, then its label is a word of length less than $r$.
Let $s$ be the word read in the outer boundary of $H_k$ starting at $v_0$, and $t^{-1}$ the word read in the inner boundary of $H_k$ starting at $v_2$.
We have that $sb_1b_2t^{-1}b_2^{-1}b_1^{-1}=1$ in $G$. Then $s \bcr t$. 

Since $s_0$ and $t_0$ are cyclic permutations of $u$ and $v$ respectively, and $s$ and $t$ are cyclic permutations of $s_k$ and $t_k$ respectively, we have
$$u \tcbcr  s_0 \bcr s_1 \bcr \ldots \bcr s_k \tcbcr  s \bcr t \tcbcr  t_k \bcr \ldots \bcr t_0 \tcbcr  v.$$
\end{proof}

A slight modification in the proof of Lemma \ref{Lemmaboundarylayer} allows one to obtain a lower bound on the length of a the words which represent the trivial element in the group $G$, even if the relators have different lengths.

\begin{prop}\label{propTrivialWords}
\statementPropTrivialWords
\begin{proof}
Let $M$ be a reduced disk diagram.
We follow the same steps as in the proof of Lemma \ref{Lemmaboundarylayer}
and we get that
$$2\pi\chi(M) \leq \sum_{v\in V_1}\sum_{c \ni v}\frac{\cornerFirstEdgeLength{c}+\cornerSecondEdgeLength{c}}{l_r(c)}\pi - \sum_{v\in V_2}\sum_{c \ni v}\frac{\cornerFirstEdgeLength{c}+\cornerSecondEdgeLength{c}}{l_r(c)}\pi.$$
In particular since the terms $\frac{\ell_i(c)}{\ell_r(c)}$ in the first sum which do not correspond to edges in the boundary cancel with terms in the second sum, we have $r_{\min}\chi(M) \leq V(\partial M)$.
Therefore, since $M$ is a disk, $r_{\min}\leq V(\partial M)$, which implies that words representing the trivial element have length at least $r_{\min}$. 

For the second statement, by removing all the relators of length $1$ along with the corresponding generators, we can assume that each relator of $P$ has length at least $2$. Note that condition $C'(\frac{1}{2})$ guarantees that each of these generators can appear in only one relator.
\end{proof}

\end{prop}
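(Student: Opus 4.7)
The plan is to adapt the Gauss--Bonnet/boundary-layer argument of Lemma \ref{Lemmaboundarylayer} to the setting of non-uniform relator lengths. Given a nontrivial word $W$ representing the trivial element of $G$, I would first take a reduced van Kampen disk diagram $\varphi\colon M\to K_P$ whose boundary reads $W$, so that $V(\partial M)=|W|$; it then suffices to establish $V(\partial M)\ge r_{\min}$.

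As in Section \ref{sectionNonPositiveCurvature}, I would assign the weights that make every face of $M$ have curvature zero and, by $\TTMetric$, every interior vertex have non-positive curvature. I would then reproduce verbatim the construction of the auxiliary complex $B$ from the boundary layer of $M$, the partition of $\vertices{B}$ into $V_1\subset\partial M$ and $V_2$, and the two Gauss--Bonnet computations on $M$ and on $B$ used in the proof of Lemma \ref{Lemmaboundarylayer}. The same bookkeeping carried out there produces
\begin{equation*}
2\pi\chi(M)\;\le\;\sum_{v\in V_1}\sum_{c\ni v}\frac{\ell_1(c)+\ell_2(c)}{\ell_r(c)}\pi\;-\;\sum_{v\in V_2}\sum_{c\ni v}\frac{\ell_1(c)+\ell_2(c)}{\ell_r(c)}\pi,
\end{equation*}
the only change from the uniform case being that the constant $r$ in each denominator is replaced by the face-dependent length $\ell_r(c)$.

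The key new observation is that in the difference $\sum_{V_1}-\sum_{V_2}$ every term $\ell_i(c)/\ell_r(c)$ coming from an edge of $B$ that is not on $\partial M$ is matched by an identical term in the opposite sum arising from the same face (and hence the same denominator), so these interior-edge contributions cancel pairwise. What survives are the contributions from edges on $\partial M$; each such edge has length $1$ and lies in a face of relator length at least $r_{\min}$, so the surviving sum is bounded above by $\tfrac{2V(\partial M)}{r_{\min}}\pi$. Combining yields $r_{\min}\chi(M)\le V(\partial M)$, and since $M$ is a disk, $\chi(M)=1$, giving $|W|\ge r_{\min}$.

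For the ``In particular'' clause, I would observe that $C'(\tfrac{1}{2})$ forces every piece of a length-$1$ relator to have length $0$, so the unique generator appearing in such a relator occurs in no other relator; removing every length-$1$ relator together with its generator therefore yields a presentation of the same group $G$ in which every relator has length at least $2$. Since $P$ was assumed to contain a relator of length $\ge 2$, at least one generator $x$ survives this reduction, and the main statement applied to the reduced presentation (with $r_{\min}\ge 2$) shows that the length-$1$ word $x$ cannot represent the trivial element, so $G$ is nontrivial. The main obstacle I anticipate is precisely this cancellation step in the non-uniform case: the face-dependence of $\ell_r(c)$ must be tracked edge-by-edge through $B$ (boundary edges, chord edges, and spike edges in particular), and one has to verify that no spurious positive contributions survive outside the $\partial M$-edge terms.
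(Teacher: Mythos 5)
Your proposal is correct and follows essentially the same route as the paper: a reduced disk diagram, the weight assignment and boundary-layer complex $B$ from Lemma \ref{Lemmaboundarylayer}, the Gauss--Bonnet inequality with face-dependent denominators $\ell_r(c)$, cancellation of the interior-edge terms, and the reduction of the ``in particular'' clause via $C'(\tfrac12)$ and removal of length-$1$ relators. The only nit is that ``every term not on $\partial M$ cancels pairwise'' is slightly stronger than needed (edges of $\partial M_1$ have both endpoints in $V_2$, so they do not cancel, they simply contribute nonpositively, which only helps the inequality), but the paper's own phrasing is equally loose and you appropriately flag this bookkeeping as the point to verify.
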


In Theorem \ref{TheoremQuadraticDehn} we proved the existence of quadratic Dehn functions and solvability of the conjugacy problem for presentations satisfying conditions $\TTMetric-C'(\frac{1}{2})$, provided the relators have the same length. We believe that the result is still valid without the assumption on the lengths of the relators. We discuss now a strategy to prove solvability of the word problem for a  wider class of groups. Given a presentation $P = \langle X \ | \ R\rangle$ of a group $G$, our aim is to obtain a new presentation $P'$ of a group $H$ such that $G$ embeds in $H$, and such that $P'$ satisfies conditions $\TTMetric-C'(\frac{1}{2})$ and the relators have the same length. By Theorem \ref{TheoremQuadraticDehn}, this  would imply that $H$, and therefore $G$, has solvable word problem. To do so, we will choose a positive integer $n_x$ for some $x\in X$ and replace every occurrence of $x$ in the relators by $x^{n_x}$. If the element $x$ in the group $G$ has infinite order, this corresponds to adding an $n_x$--th root, or equivalently, to taking the amalgamated product of $G$ with $\Z$ along the subgroup $n_x\Z$. If we make these replacements for a finite number of $x\in X$, we obtain a new presentation $P'$ of an overgroup $H$ of $G$. Of course, it is not always possible to choose the $n_x$ so that all the relators in the new presentation have the same length and even if this is possible, the presentation $P'$ obtained may not satisfy conditions $\TTMetric-C'(\frac{1}{2})$ (even if $P$ does). The following example illustrates this technique.

\begin{example}
Consider the following presentation
$$P = \langle a,b,c,s,t \mid tats^{-1}b^{-1}s^{-1}, tbts^{-1}c^{-2}s^{-1}, tc^2ts^{-1}a^{-1}s^{-1} \rangle.$$
Note that the relators do not have the same length. This presentation does not satisfy conditions $C(5)$, 
$T(4)$ nor $\TTMetric$. Now, it is easy to see that $a$ and $b$ have infinite order in the group $G$ presented by $P$, and by choosing $n_a=2$ and $n_b=2$, we obtain the 
following presentation
$$P' = \langle a,b,c,s,t \mid ta^2ts^{-1}b^{-2}s^{-1}, tb^2ts^{-1}c^{-2}s^{-1}, tc^2ts^{-1}a^{-2}s^{-1} \rangle.$$
Note that all its relators have the same length. One can verify that $P'$ satisfies conditions $\TTMetric-C'(\frac{1}{2})$ (although it does not satisfy conditions $C(5)$ nor $T(4)$). This implies that $G$ has solvable word problem.
\end{example}

\section{Verifying condition $\TTMetric$}\label{SectionAlgorithm}

In this section we give an algorithm to verify if a finite presentation  $P=\langle X\mid R\rangle$ satisfies condition $\TTMetric$.
Note that a priori it is not clear that such an algorithm exists, since the definition involves checking something for every possible diagram over $P$.
The algorithm described here has been implemented in the \textsf{GAP}\cite{GAP} package \textsf{SmallCancellation} \cite{SmallCancellation}. 

We describe a weighted directed graph $\graphFromPresentation{P}$.
The vertices of this graph are the tuples $(r,p,q)$ such that
\begin{itemize}
\item $r\in R^*$,
\item $p$ and $q$ are pieces, and
 \item we can write $r=qsp$ without cancellations.
\end{itemize}
There is an edge $(r,p,q)\to (r', p', q')$ if 
\begin{itemize}
 \item $p'=q^{-1}$, and
 \item $r'\neq r^{-1}$.
\end{itemize}
The weight of this edge is $1-\frac{|p|+|q|}{|r|}$ (by simplicity, we divide by $\pi$ the weights that we considered in Section \ref{sectionNonPositiveCurvature}). The weight of a cycle is the sum of the weights of its edges.

Given a diagram $\varphi \colon M\to K_P$, we can fix an orientation in $M$ as explained in Section \ref{sectionTTMetric}.
The corners in the diagram inherit the orientations of the corresponding faces.
Note that if $c$ is a corner at an interior vertex $v(c)$,  then $(\cornerRelation{c}, \cornerFirstWord{c}, \cornerSecondWord{c})$ is a vertex in $\graphFromPresentation{P}$.
Here $\cornerRelation{c}$ denotes the relator read in the boundary of the face, starting from the vertex $v(c)$ and following the orientation of the face, $\cornerFirstWord{c}$ and $\cornerSecondWord{c}$ are the subwords written in the edges of the oriented corner (the first edge being the one oriented towards $\cornerVertex{c}$).
This remark and the following proposition make clear why this graph is meaningful.

\begin{prop}\label{propWeightCycles}

(i) Let $v$ be an interior vertex in a reduced diagram $\varphi \colon M\to K_P$.
Then there is a directed cycle $\gamma$ in $\graphFromPresentation{P}$ of length at least $3$ and weight $d(v)-d'_F(v)$.

(ii) Let $\gamma$ be a directed cycle in $\graphFromPresentation{P}$ of length at least $3$ and weight $w$. Then there is a reduced diagram over $P$ and an interior vertex $v$ such that $d(v)-d'_F(v)=w$.
\begin{proof}
We first prove (i).
Let $v$ be an interior vertex in a reduced diagram $\varphi \colon M\to K_P$.
Let $c_1,\ldots, c_n$ be the corners around $v$, numbered clockwise.
Then $\cornerFirstWord{c_{i+1}}=\cornerSecondWord{c_i}^{-1}$ (indices are modulo $n$).
Since the diagram is reduced we have $r(c_{i+1})^{-1}\neq r(c_i)^{-1}$ and therefore there is an edge
$$   (\cornerRelation{c_i},\cornerFirstWord{c_i},\cornerSecondWord{c_i})\xrightarrow{e_i} (\cornerRelation{c_{i+1}},\cornerFirstWord{c_{i+1}},\cornerSecondWord{c_{i+1}})$$ in $\graphFromPresentation{P}$ with weight $1-\frac{\cornerFirstEdgeLength{c_i}+\cornerSecondEdgeLength{c_i}}{\cornerRelationLength{c_i}}$.
Then the cycle $\gamma = (e_1,\ldots, e_n)$ has weight $d(v)-d'_F(v)$.

\begin{figure}[H]
\begin{tikzpicture}[dot/.style = {circle, fill, minimum size=#1, inner sep=0pt, outer sep=0pt},
dot/.default = 6pt, scale=0.6]

\node[dot, label=below right:{$v$}] (O) at (0, 0){};
\node[dot, label=above right:{$v_1$}] (X1) at (3.5, 3.5){};
\node[dot, label= right:{$v_2$}] (X2) at (3.6, -0.9){};
\node[dot, label=below:{$v_3$}] (X3) at (-1, -3.5){};
\node[dot, label=left:{$v_4$}] (X4) at (-3.5,0){};
\node[dot, label=above left:{$v_5$}] (X5) at (-2, 4){};
\node (R1) at (2.8,0.4) {$r_1$};
\node (R2) at (0.9,-1.8) {$r_2$};
\node (R3) at (-1.9,-1.2) {$r_3$};
\node (R4) at (-2.4,1.5) {$r_4$};
\node (R5) at (0.5,2.9) {$r_5$};
\begin{scope}[thick, decoration={
    markings,
    mark=at position 0.5 with {\arrow{>}}}
    ] 
\draw[postaction={decorate}] (X1.center) -- (O.center) node[midway, below right] {$p_1$};
\draw[postaction={decorate}] (X2.center) --  (O.center) node[midway,below ] {$p_2$};
\draw[postaction={decorate}] (X3.center) --  (O.center)node[midway,left ] {$p_3$};
\draw[postaction={decorate}] (X4.center) --  (O.center)node[midway,above ] {$p_4$};
\draw[postaction={decorate}] (X5.center) -- (O.center)node[midway,right ] {$p_5$};
\end{scope}

\begin{scope}[thick, decoration={
    markings,
    mark=at position 0.5 with {\arrow{>}}},
    ] 

\draw (X2.center) edge[bend right, postaction={decorate}]node[midway,right ]{$s_1$} (X1.center);
\draw (X3.center) edge[bend right, postaction={decorate}]node[midway,below right ]{$s_2$} (X2.center);
\draw (X4.center) edge[bend right, postaction={decorate}]node[midway,below left ]{$s_3$} (X3.center);
\draw (X5.center) edge[bend right, postaction={decorate}]node[midway,left ]{$s_4$} (X4.center);
\draw (X1.center) edge[bend right, postaction={decorate}]node[midway,above ]{$s_5$} (X5.center);
\end{scope}

\begin{scope}[thick,decoration={
    markings,
    mark=at position 1 with {\arrow{>}}}
    ] 

\node (T) at (-0.25,0.5){};
\draw[postaction={decorate}] (R1)+(T) arc (120:390:0.5);
\draw[postaction={decorate}] (R2)+(T) arc (120:390:0.5);
\draw[postaction={decorate}] (R3)+(T) arc (120:390:0.5);
\draw[postaction={decorate}] (R4)+(T) arc (120:390:0.5);
\draw[postaction={decorate}] (R5)+(T) arc (120:390:0.5);

\end{scope}
\node (V1) at (-9,1) {$(r_1,p_1,p_2^{-1})$};
\node (V2) at (-9.5,-2) {$(r_2,p_2,p_3^{-1})$};
\node (V3) at (-15,-1.5){$(r_3,p_3,p_4^{-1})$};
\node (V4) at (-14.6,1.5) {$(r_4,p_4,p_5^{-1})$};
\node (V5) at (-11.5,4.5) {$(r_5,p_5,p_1^{-1})$};
\begin{scope}[thick,decoration={
    markings,
    mark=at position 1 with {\arrow{>}}}
    ] 
\draw[->] (V1) -- (V2) node[midway,right ] {$1-\frac{|p_1|+|p_2|}{|r_1|}$};
\draw[->] (V2) -- (V3) node[midway,below, inner sep=10pt ] {$1-\frac{|p_2|+|p_3|}{|r_2|}$};
\draw[->] (V3) -- (V4) node[midway,left ] {$1-\frac{|p_3|+|p_4|}{|r_3|}$};
\draw[->] (V4) -- (V5) node[midway,left ] {$1-\frac{|p_4|+|p_5|}{|r_4|}$};
\draw[->] (V5) -- (V1) node[midway,right ]  {$1-\frac{|p_5|+|p_1|}{|r_5|}$};

\end{scope}
\end{tikzpicture}
\caption{On the left a cycle $\gamma$ in $\graphFromPresentation{P}$, on the right the corresponding diagram constructed in the proof of part (ii) of Proposition \ref{propWeightCycles}.}
\label{figuretwo}
\end{figure}

We now prove (ii).
Let $n\geq 3$ and let $\gamma$ be a cycle in $\graphFromPresentation{P}$ of length $n$.
By the first condition for the edges of  $\graphFromPresentation{P}$, the vertices of $\gamma$ can be named
$(r_1,p_{1},p_{2}^{-1})$, $(r_2,p_{2},p_{3}^{-1})$, $\ldots,$ $(r_n,p_{n},p_{1}^{-1})$.
For each $i$ we consider the word $s_i$ such that $r_i = p_{i+1}^{-1}s_ip_i$ without cancellations.
We construct a disk diagram $\Delta$ with $n+1$ vertices, $2n$ edges and $n$ faces as follows.
The vertices of $\Delta$ will be denoted by $v,v_1,\ldots, v_n$.
For each $i$ the diagram has an edge $v_i  \xrightarrow{e_i} v$ which reads $p_i$ and an edge $v_{i+1}\xrightarrow{\alpha_i} v_{i}$ which reads $s_i$.
For each $i$ there is a face $f_i$ attached with boundary $(e_{i+1}^{-1},\alpha_i,e_i)$ which reads $r_i$ (starting at $v$) (see Figure \ref{figuretwo}).
By the second condition for an edge in $\graphFromPresentation{P}$, the diagram is reduced.
Note that by construction, $d(v)-d'_F(v)$ is the weight of $\gamma$.
\end{proof}
\end{prop}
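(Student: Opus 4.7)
My plan is to prove both parts by a direct translation: the local combinatorial data at an interior vertex of a reduced diagram is precisely encoded by a vertex of $\graphFromPresentation{P}$, and consistency of that data around the vertex, together with the reducedness hypothesis, gives exactly the two conditions defining an edge of $\graphFromPresentation{P}$.

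For part (i) I would fix an interior vertex $v$ in a reduced diagram $\varphi\colon M\to K_P$ and enumerate its incident corners $c_1,\ldots,c_n$ in clockwise order inherited from the surface orientation, where $n=d(v)\ge 3$ since we have removed all interior vertices of degree $2$ (and there are no interior vertices of degree $1$ in a reduced diagram). To each corner $c_i$ I associate the triple $T_i=(\cornerRelation{c_i},\cornerFirstWord{c_i},\cornerSecondWord{c_i})$. By the definition of $\cornerRelation{c_i}$ we can write $\cornerRelation{c_i}=\cornerSecondWord{c_i}\,s_i\,\cornerFirstWord{c_i}$ without cancellation for some word $s_i$, so $T_i$ is a vertex of $\graphFromPresentation{P}$. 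Consecutive corners $c_i$ and $c_{i+1}$ share an edge at $v$ which is traversed by the two incident faces in opposite directions, so $\cornerFirstWord{c_{i+1}}=\cornerSecondWord{c_i}^{-1}$; and the hypothesis that the diagram is reduced translates exactly to $\cornerRelation{c_{i+1}}\neq\cornerRelation{c_i}^{-1}$ (since $\cornerRelation{c_i}^{-1}$ is the backward reading of $\partial f(c_i)$ from $v$, and equality with $\cornerRelation{c_{i+1}}$ is exactly the cancellation condition defining a reducing pair). Hence $T_i\to T_{i+1}$ is an edge of $\graphFromPresentation{P}$, and summing the weights yields
\[
\sum_{i=1}^n\left(1-\frac{\cornerFirstEdgeLength{c_i}+\cornerSecondEdgeLength{c_i}}{\cornerRelationLength{c_i}}\right)=n-d'_F(v)=d(v)-d'_F(v).
\]

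For part (ii) I would reverse this construction. Given a cycle $\gamma$ of length $n\ge 3$, the first edge-condition in $\graphFromPresentation{P}$ forces the vertices to be of the form $(r_i,p_i,p_{i+1}^{-1})$ for $i=1,\ldots,n$ (indices mod $n$), and each vertex of $\graphFromPresentation{P}$ supplies a word $s_i$ with $r_i=p_{i+1}^{-1}s_ip_i$ without cancellation. I then assemble the pinwheel diagram $\Delta$ of Figure~\ref{figuretwo}: a central vertex $v$ joined by oriented edges $e_i\colon v_i\to v$ labeled $p_i$, and boundary edges $\alpha_i\colon v_{i+1}\to v_i$ labeled $s_i$, with faces $f_i$ having oriented boundary $(e_{i+1}^{-1},\alpha_i,e_i)$ reading $r_i$ from $v$. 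The labels along every shared edge match by construction, so $\Delta$ is a legitimate diagram over $P$; the only face-pairs sharing an edge are the consecutive $(f_i,f_{i+1})$ across $e_{i+1}$, and the second edge-condition $r_{i+1}\ne r_i^{-1}$ is precisely the non-reducibility condition at that edge. Thus $\Delta$ is reduced, $v$ is interior of degree $n$, and the same computation as in (i) yields $d(v)-d'_F(v)=w$.

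The main technical subtlety in both directions is the identification of the combinatorial reducedness condition across a shared edge with the algebraic condition $r'\ne r^{-1}$ on elements of $R^*$. This requires carefully fixing the convention that boundary labels are read from the shared vertex $v$ in the intrinsic orientation of each face, after which $r_i^{-1}$ is literally the reverse reading of $\partial f_i$ from $v$ and the equivalence becomes tautological. A minor secondary point is justifying $n\ge 3$ in (i); this follows from the standing assumption that interior vertices of degree $2$ have been removed together with the standard fact that reduced diagrams have no interior vertices of degree $1$.
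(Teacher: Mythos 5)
Your proposal is correct and follows essentially the same route as the paper's proof: in (i) you read off the cycle from the clockwise-ordered corners at $v$, checking the two edge conditions exactly as the authors do, and in (ii) you build the same pinwheel diagram with central vertex $v$, spoke edges labeled $p_i$, rim edges labeled $s_i$, and faces reading $r_i$. The only differences are cosmetic refinements (explicitly justifying $n\geq 3$ and spelling out the equivalence between reducedness and the condition $r'\neq r^{-1}$), which the paper leaves implicit.
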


\begin{cor}\label{coroAlgorithmTTMetric}
A presentation $P$ satisfies condition $\TTMetric$ if and only if each directed cycle in $\graphFromPresentation{P}$ of length at least $3$ has weight greater than or equal to $2$.
 
A presentation $P$ satisfies condition $\TTMetricStrict$ if and only if each directed cycle in $\graphFromPresentation{P}$ of length at least $3$ has weight greater than $2$.
\end{cor}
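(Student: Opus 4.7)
The plan is to read this corollary as a direct translation of Proposition \ref{propWeightCycles} into a statement about cycle weights. Condition $\TTMetric$ is the assertion that $d(v)-d'_F(v) \geq 2$ for every interior vertex $v$ of every reduced diagram (with no interior vertices of degree $2$). Since interior vertices in such diagrams have degree at least $3$, Proposition \ref{propWeightCycles} tells me that the set of values $\{d(v)-d'_F(v)\}$, as $v$ ranges over interior vertices of reduced diagrams, coincides with the set of weights of directed cycles of length at least $3$ in $\graphFromPresentation{P}$. The corollary is then a purely logical restatement.

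For the forward implication, I would assume $P$ satisfies $\TTMetric$ and pick any directed cycle $\gamma$ in $\graphFromPresentation{P}$ of length $\geq 3$. Invoking part (ii) of Proposition \ref{propWeightCycles} produces a reduced diagram $\varphi\colon M\to K_P$ together with an interior vertex $v$ such that $d(v)-d'_F(v)$ equals the weight of $\gamma$. The hypothesis $d'_F(v) \leq d(v)-2$ then yields $\mathrm{weight}(\gamma) \geq 2$, as required.

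For the reverse implication, I would assume every directed cycle of length $\geq 3$ in $\graphFromPresentation{P}$ has weight $\geq 2$, take an arbitrary reduced diagram $\varphi\colon M\to K_P$, and pick any interior vertex $v$. Part (i) of Proposition \ref{propWeightCycles} supplies a directed cycle $\gamma$ of length $d(v)\geq 3$ with weight precisely $d(v)-d'_F(v)$. The hypothesis then gives $d(v)-d'_F(v)\geq 2$, i.e.\ $d'_F(v)\leq d(v)-2$, which is exactly condition $\TTMetric$. The corresponding equivalence for $\TTMetricStrict$ follows verbatim by replacing each non-strict inequality with its strict counterpart.

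I do not anticipate a genuine obstacle, since Proposition \ref{propWeightCycles} has already done the geometric work of passing between corners around a vertex and edges around a cycle. The only bookkeeping point worth emphasizing is the matching of the "length $\geq 3$" condition on cycles with the "no interior vertices of degree $2$" convention on diagrams: part (i) always produces a cycle of length $d(v)$, and the construction in part (ii) produces a diagram whose central vertex has degree equal to the length of the input cycle, so both restrictions align without any case distinction.
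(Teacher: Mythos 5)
Your proposal is correct and follows the same route the paper intends: the corollary is stated without its own proof precisely because it is a direct logical translation of Proposition \ref{propWeightCycles}, using part (ii) for the ``only if'' direction and part (i) for the ``if'' direction, together with the observation that the length-$\geq 3$ restriction on cycles matches the no-degree-$2$-interior-vertices convention on diagrams. Nothing is missing.
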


Note that Corollary \ref{coroAlgorithmTTMetric} gives an algorithm to check if a finite presentation satisfies $\TTMetric$, for it is possible to use Dijkstra's algorithm to find the least weight of a directed cycle of length at least $k$ in a directed graph with positive edge weights.
This can be done by constructing an auxiliary graph having $(k+1)$ vertices for each vertex in the original graph.
For more details on this see the implementation in the \textsf{GAP} package \textsf{SmallCancellation}\cite{SmallCancellation}.

From Proposition \ref{propWeightCycles} we deduce the following result, which is used in the proof of Theorem \ref{TheoremHyperbolic}.

\begin{cor}\label{coroN}
If a finite presentation $P$ satisfies condition $\TTMetricStrict$ there is a constant $N<0$ such that $\kappa(v) \leq N$ for every diagram $\Delta$ and every interior vertex $v\in \Delta$.
\begin{proof}
Since the weights are positive, we can take $N$ to be $-\pi$ times the minimum weight of a simple directed cycle of length at least $3$ in $\graphFromPresentation{P}$. Note that, since the graph $\graphFromPresentation{P}$ is finite, there is a finite number of such cycles.
\end{proof}

\end{cor}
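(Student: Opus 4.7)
The plan is to reinterpret the curvature of an interior vertex as (a multiple of) the weight of a directed cycle in the finite graph $\graphFromPresentation{P}$, and then exploit a simple discreteness/finiteness observation to obtain uniformity in the diagram.

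First I would spell out $\kappa(v)$ with the weight assignment from Section~\ref{sectionNonPositiveCurvature}. Since an interior vertex has circular link, $\chi(\lk_v)=0$, so
\begin{align*}
\kappa(v) &= 2\pi - \sum_{c\ni v}\Bigl(\pi - \frac{\cornerFirstEdgeLength{c}+\cornerSecondEdgeLength{c}}{\cornerRelationLength{c}}\pi\Bigr) = -\pi\bigl(d(v)-d'_F(v)-2\bigr).
\end{align*}
Proposition~\ref{propWeightCycles}(i) then provides, for every interior vertex $v$ of every reduced diagram $\Delta$, a directed cycle $\gamma_v$ in $\graphFromPresentation{P}$ of length at least $3$ whose weight equals $d(v)-d'_F(v)$. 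Thus $\kappa(v) = -\pi(w(\gamma_v)-2)$, and by Corollary~\ref{coroAlgorithmTTMetric} condition $\TTMetricStrict$ forces $w(\gamma_v) > 2$. What remains is to pass from this strict inequality to a uniform gap independent of $\Delta$ and $v$.

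For uniformity, I would observe that every edge of $\graphFromPresentation{P}$ has weight $\tfrac{|r|-|p|-|q|}{|r|}$, a nonnegative rational whose denominator divides the relator length $|r|$. Letting $D$ be the least common multiple of the lengths of the (finitely many) relators in $R$, every edge weight lies in $\tfrac{1}{D}\Z_{\geq 0}$, and hence so does the weight of every directed cycle in $\graphFromPresentation{P}$. Since $\tfrac{1}{D}\Z$ is discrete in $\R$ and $w(\gamma_v) > 2$, we obtain $w(\gamma_v) \geq 2 + \tfrac{1}{D}$, and therefore $\kappa(v) \leq -\tfrac{\pi}{D}$. Taking $N := -\tfrac{\pi}{D} < 0$ concludes the argument.

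There is no serious obstacle; the subtlety to watch for is that the cycle $\gamma_v$ furnished by Proposition~\ref{propWeightCycles}(i) need not be simple, so one cannot a priori restrict to the (finite) set of simple cycles of length $\geq 3$ in $\graphFromPresentation{P}$. The discreteness argument above sidesteps this cleanly. An alternative route, closer to the author's phrasing, is to use the finiteness of simple cycles of length $\geq 3$ together with a cycle-decomposition argument (using nonnegativity of edge weights) to reduce the problem to the simple case; this works but requires handling the possibility that a non-simple closed walk of length $\geq 3$ decomposes entirely into simple sub-cycles of length $1$ or $2$, which is a bit delicate.
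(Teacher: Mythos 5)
Your proof is correct, but the way you obtain the \emph{uniform} negative upper bound differs genuinely from the paper's. Both arguments share the same skeleton: for an interior vertex $v$ one has $\kappa(v)=-\pi\bigl(d(v)-d'_F(v)-2\bigr)$, Proposition \ref{propWeightCycles}(i) realizes $d(v)-d'_F(v)$ as the weight of a directed cycle of length at least $3$ in $\graphFromPresentation{P}$, and Corollary \ref{coroAlgorithmTTMetric} gives weight strictly greater than $2$. The paper then gets uniformity by restricting attention to \emph{simple} directed cycles of length at least $3$, of which there are finitely many, and taking the minimum of their weights; you instead observe that every edge weight $\frac{|r|-|p|-|q|}{|r|}$ lies in $\frac{1}{D}\Z_{\geq 0}$ for $D$ the least common multiple of the relator lengths, so every cycle weight exceeding $2$ is at least $2+\frac{1}{D}$, yielding $N=-\frac{\pi}{D}$. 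Your route buys two things: it gives an explicit, easily computable constant, and it avoids the reduction from arbitrary closed walks to simple cycles --- a reduction that, as you correctly point out, is not immediate, since the cycle produced by Proposition \ref{propWeightCycles}(i) need not be simple and a non-simple closed walk of length at least $3$ could in principle decompose entirely into simple cycles of length at most $2$, to which the hypothesis on cycles of length at least $3$ does not directly apply. The paper's route, by contrast, is the one that matches the algorithm of Section \ref{SectionAlgorithm} (the constant it produces is the quantity the \textsf{GAP} implementation actually computes), but as written it leaves that reduction implicit. Your discreteness argument is a clean and complete alternative.
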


The following examples of groups which do not satisfy $\TTMetric$ are consistent with our conjecture that $\TTMetric-C'(\frac{1}{2})$ implies a quadratic isoperimetric inequality even if the presentation has relators of different lengths.

\begin{example}
The presentations A and B from \cite{WLYL} have unsolvable word problem.
A \textsf{GAP} computation using \textsf{SmallCancellation} shows that these presentations do not satisfy $\TTMetric$.
\end{example}

\begin{example}
From \cite{BMS} we know the Baumslag--Solitar group $\BS(p,q)$ has exponential Dehn function if $|p|\neq |q|$.
Therefore, by Theorem \ref{TheoremQuadraticDehn} the groups $\BS(n,n+1)$ do not satisfy $\TTMetric$.
It can be seen that the minimum of $d(v)-d'_F(v)$ for $v$ an interior vertex in a diagram for the usual presentation of $\BS(n,n+1)$ is $2-\frac{1}{2n+3}$, which tends to $2$ as $n\to \infty$.
Note that $\BS(n,n)$ satisfies $\TTMetric-C'(\frac{1}{2})$ so by Theorem \ref{TheoremQuadraticDehn} one can verify the well-known fact that these groups have quadratic Dehn function.
\end{example}

\begin{example}
In \cite[Lemma 11]{BMS} a family of groups $M_{c,d}$ is considered and it is proved that the Dehn function of $M_{c,d}$ has order $n^{c+d}$.
We have that $M_{1,1}$ (which is a RAAG) satisfies $\TTMetric-C'(\frac{1}{2})$.
Some \textsf{GAP} computations suggest that the minimum of $d(v)-d'_F(v)$ for these groups is $\frac{23}{20}$ for any $(c,d)\neq (1,1)$.
\end{example}

\begin{example}
In \cite{BMS} it is proved that the Dehn function of the group $E= \langle b,s,t\mid s^{-1}bs =b^2, t^{-1}bt=b\rangle$ is at least $2^n$.
This group does not satisfy $\TTMetric$ (the minimum of $d(v)-d'_F(v)$ is $\frac{8}{5}$).
\end{example}

\begin{example}
 In \cite[Chapter 8]{ECHLPT} it is shown that the Heisenberg group $\langle a,b,z\,\mid\, z=[a,b],\, [a,z],\, [b,z]\rangle$ is not automatic.
 In \cite{Edjvet} it is shown that the Heisenberg group admits a cyclic presentation
 $$P=\langle x_0,x_1\mid
 x_0^{-1}x_1x_0x_1^{-1}x_0^{-1}x_0x_1,\, 
 x_1^{-1}x_0x_1x_0^{-1}x_1^{-1}x_1x_0\rangle.$$
 For this presentation the minimum of $d(v)-d'_F(v)$ is $0$.
 This implies that the presentation is not DR, there is a reduced spherical diagram with two $0$-cells.
\end{example}
It would be interesting to know more about what the minimum of $d(v)-d'_F(v)$ says about a presentation.

\section{Non-metric and dual conditions}\label{SectionNonMetricAndDual}

In this section we comment the non-metric analogues to conditions $\TTMetric$ and $\TTMetricStrict$ and their dual versions. 

\subsection*{Dual conditions}
Similarly as in the case of classical small cancellation conditions $C$ and $T$, it is natural to consider the dual notions of conditions $\TTMetric$ and $\TTMetricStrict$.

Let $\varphi \colon M\to K_P$ be a diagram over a presentation $P$.
We denote by $\cornerVertexLength{c}$ the sum of the lengths of all the edges incident at the vertex of $c$.
Now we define $d'_V(f) = \sum_{c \in f} \frac{\cornerFirstEdgeLength{c}+\cornerSecondEdgeLength{c}}{\cornerVertexLength{c}}$, where the sum is over all the corners of $f$.

\begin{definition}
We say that a presentation $P$ satisfies the small cancellation condition $\CCMetric$ if for every interior face $f$ of any reduced diagram over $P$, $d'_V(f) \leq d(f)-2$.
Similarly, $P$ satisfies the strict small cancellation condition $\CCMetricStrict$ if for every interior face of any reduced diagram over $P$, $d'_V(f) < d(f)-2$.
A group $G$ which admits a presentation $P$ satisfying condition $\CCMetric$ (resp. $\CCMetricStrict$) is called a $\CCMetric$-group (resp. $\CCMetricStrict$-group).
\end{definition}

Once again, condition $\CCMetricStrict$ generalizes the classical metric small cancellation conditions.
With similar proofs to the ones exhibited for conditions $\TTMetric$ and $\TTMetricStrict$ one can obtain, for conditions $\CCMetric$ and $\CCMetricStrict$, results analogous to Theorem \ref{TheoremDR}, Theorem \ref{TheoremHyperbolic} and Theorem \ref{TheoremQuadraticDehn}.
However, neither of these conditions is implied by the other.
For example, Artin groups of dimension $2$ satisfies $\TTMetric$ but not $\CCMetric$.
The following is an example of the converse situation.

\begin{example}
Let $P=\langle a,b,c,x,y,z \ | \ abx, cby, ac^{-1}z\rangle$. It is easy to check that this presentation satisfies $\CCMetric$. However Figure \ref{figurethree} shows that it does not satisfy condition $\TTMetric$.
\end{example}

\begin{figure}[H]
\begin{tikzpicture}
\filldraw (0,0) circle (2pt)
(-2,-1) circle (2pt)
(2,-1) circle (2pt)
(0,2) circle (2pt);
\begin{scope}[very thick,decoration={
    markings,
    mark=at position 0.5 with {\arrow{>}}}
    ] 
    \draw[postaction={decorate}] (0,2)--(0,0);
    \draw[postaction={decorate}] (-2,-1)--(0,0);
    \draw[postaction={decorate}] (0,0)--(2,-1);
    \draw[postaction={decorate}] (2,-1)--(0,2);
    \draw[postaction={decorate}] (-2,-1)--(0,2);
    \draw[postaction={decorate}] (2,-1)--(-2,-1);
\end{scope}

\draw (0.2,1) node {$a$};
\draw (1,-0.3) node {$b$};
\draw (-1,-0.3) node {$c$};
\draw (1.3,0.6) node {$x$};
\draw (0,-1.3) node {$y$};
\draw (-1.3,0.6) node {$z$};
\end{tikzpicture}
\caption{A reduced diagram not satisfying condition $\TTMetric$.}
\label{figurethree}
\end{figure}

\subsection*{Non metric conditions}

Conditions $\TTMetric$, $\TTMetricStrict$, $\CCMetric$ and $\CCMetricStrict$ are metric conditions, in the sense that one measures the length of the relators and pieces. Using the same ideas one can generalize the classical non metric small cancellation conditions.

First we will define the non metric analogues to conditions $\CCMetric$ and $\CCMetricStrict$.
Let $P$ be a presentation, and $\varphi \colon M\to K_P$ a diagram over $P$.
Given a $2$-cell $f$ of $M$, we define $d_V(f) = \sum_{v\in f}\frac{2}{d(v)}$.
Note that this definition would coincide with $d'_V(f)$ if the length of every edge were equal to $1$.
Informally, we count the number of pieces instead of measuring their length.

\begin{definition}
We say that a presentation $P$ satisfies the small cancellation condition $\CC$ if for every interior face $f$ of any reduced diagram over $P$, $d_V(f) \leq d(f)-2$. Similarly, $P$ satisfies the strict small cancellation condition $\CCStrict$ if for every interior face of any reduced diagram over $P$, $d_V(f) < d(f)-2$. A group $G$ which admits a presentation $P$ satisfying condition $\CC$ (resp. $\CCStrict$) is called a $\CC$-group (resp. $\CCStrict$-group).
\end{definition}

One can similarly define $d_F(v)$ for a vertex in a diagram obtain non metric conditions corresponding to $\TTMetric$ and $\TTMetricStrict$. Given a reduced diagram over a presentation $P$, we define $d_F(v) = \sum_{c\ni v}\frac{2}{d(\cornerFace{c})}$, where we sum over all the corners at $v$ and $\cornerFace{c}$ is the face corresponding to $c$.

\begin{definition}
We say that a presentation $P$ satisfies the small cancellation condition $\TT$ if for every interior vertex of any reduced diagram over $P$, $d_F(v) \leq d(v)-2$, and every face has degree at least $3$. Similarly, $P$ satisfies the strict small cancellation condition $\TTStrict$ if for every interior face of any reduced diagram over $P$, $d_F(v) < d(v)-2$, and every face has degree at least $3$. A group $G$ which admits a presentation $P$ satisfying condition $\TT$ (resp. $\TTStrict$) is called a $\TT$-group (resp. $\TTStrict$-group).
\end{definition}

Non-metric conditions $\CC$, $\TTStrict$ and $\CCStrict$ appeared in \cite{HuckRosebrock} as $W$, $V^*$ and $V$ respectively. A condition similar to $\TT$ also appeared as condition $W^*$, although condition $W^*$ is defined for vertex reduced diagrams. 

\begin{remark}
Condition $\CC$ generalizes classical small cancellation conditions $C(6)-T(3)$, $C(4)-T(4)$ and $C(3)-T(6)$ (see \cite{HuckRosebrock}). 
\end{remark}

Similarly to the metric case one has the following generalization of a result by Gersten \cite[Remark 4.18]{Gersten}.

\begin{thm}\label{TheoremDRfornonmetric}
Let $P$ be a presentation satisfying condition $\CC$ or $\TT$  and without proper powers. Then it is DR.
\end{thm}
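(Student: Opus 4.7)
The plan is to adapt the Euler characteristic argument from the proof of Theorem \ref{TheoremDR} to the non-metric setting, treating the $\TT$ and $\CC$ cases in parallel. Assume $P$ has no proper powers and satisfies either $\TT$ or $\CC$, and suppose toward a contradiction that $\varphi\colon M\to K_P$ is a reduced spherical diagram over $P$. Since $M$ is a $2$-sphere, $\VV(M)-\EE(M)+\FF(M)=2$, every vertex and every face of $M$ is interior, and so the defining inequality of $\TT$ (respectively $\CC$) applies globally. The core of the proof is a pair of double-counting identities, each obtained by reindexing a sum over corners.

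For condition $\TT$, interchanging the order of summation of the corners gives
\[\sum_{v} d_F(v) \;=\; \sum_{v}\sum_{c\ni v}\frac{2}{d(\cornerFace{c})} \;=\; \sum_{f} \sum_{c\in f}\frac{2}{d(f)} \;=\; 2\FF(M),\]
while $\sum_{v} d(v) = 2\EE(M)$ as usual. Applying $d_F(v)\leq d(v)-2$ at every vertex then yields $2\EE(M)-2\FF(M)\geq 2\VV(M)$, that is, $\chi(M)\leq 0$, contradicting $\chi(M)=2$. For condition $\CC$, the same corner double-count in the dual direction produces
\[\sum_{f} d_V(f) \;=\; \sum_{f}\sum_{v\in f}\frac{2}{d(v)} \;=\; \sum_{v}\sum_{c\ni v}\frac{2}{d(v)} \;=\; 2\VV(M),\]
and combining with $\sum_{f} d(f)=2\EE(M)$ and $d_V(f)\leq d(f)-2$ gives $2\EE(M)-2\VV(M)\geq 2\FF(M)$, again contradicting $\chi(M)=2$.

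There is no serious obstacle here: once the two double-counting identities are in hand, the argument is a direct Euler characteristic computation. The points to be checked carefully, exactly as in the proof of Theorem \ref{TheoremDR}, are that the hypothesis ``no proper powers'' lets us identify our notion of reduced spherical diagram with Gersten's, and that on a sphere every vertex (respectively face) is interior, so the hypotheses of $\TT$ (respectively $\CC$) apply to every term of the corresponding sum. The slightly more delicate point for $\TT$ is to note that after suppressing interior vertices of degree $2$ the identity $\sum_v d_F(v)=2\FF(M)$ still reads off $2$ per face (since $d(f)\geq 3$ is built into $\TT$), so no face contributes a degenerate term to the double count.
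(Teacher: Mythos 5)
Your proof is correct and is exactly the argument the paper intends: the paper does not spell out a proof of Theorem~\ref{TheoremDRfornonmetric} but says it goes ``similarly to the metric case,'' i.e.\ by adapting the Euler characteristic computation from Theorem~\ref{TheoremDR}, and your two corner double-counts $\sum_v d_F(v)=2\FF(M)$ and $\sum_f d_V(f)=2\VV(M)$ are the right non-metric analogues of the identity $\sum_v d'_F(v)=2\FF(M)$ used there. One small remark: your caution about $d(f)\geq 3$ at the end is harmless but unnecessary, since the identity $\sum_{c\in f}\frac{2}{d(f)}=2$ holds for any $d(f)\geq 1$; the face-degree requirement in the definition of $\TT$ plays no role in this particular computation.
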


Unlike classical small cancellation, where the metric condition $C'(\frac{1}{\lambda})$ implies $C(\lambda+1)$, here there is no implication between the metric and non metric conditions, as the following example shows.

\begin{example}
Consider the following presentation $P =\langle x,y \ | \ y^2xy^{-2}x^{-1} \rangle$ from \cite[Example 2]{HuckRosebrock}.  In \cite{HuckRosebrock} it is shown that $P$ does not satisfy their condition $W^*$. In particular $P$ does not satisfy condition $\TT$. However, it is not difficult to verify that it satisfies condition $\TTMetric$.
\end{example}

\section{Harmonic small cancellation conditions}\label{SectionHarmonic}

Recall that a face $f$ in a diagram $M$ is said to be \textit{interior} if $f\cap \partial M =\emptyset$.

\begin{definition}
We say that a presentation $P$ satisfies the  \textit{harmonic small cancellation condition $\HMF(p)$} if, for every interior face $f$ in every reduced diagram the harmonic mean of the degrees of the neighbour faces of $f$ is at least $m$.
\end{definition}

\begin{definition}
We say that a presentation $P$ satisfies the  \textit{harmonic small cancellation condition $\HMV(q)$} if, for every interior face $f$ in every reduced diagram the harmonic mean of the degrees of the vertices of $f$ is at least $m$.
\end{definition}

\begin{remark}
Note that $C(p)\Rightarrow \HMF(p)$ and $T(q)\Rightarrow \HMV(q)$.
\end{remark}

The two propositions below follow from the definition of the harmonic small cancellation conditions by considering the sum over the faces of the diagram.
\begin{prop}
 If $P$ satisfies $\HMF(p)$ then every reduced spherical diagram over $P$ has $\FF\leq \frac{2}{p} \EE$.
\end{prop}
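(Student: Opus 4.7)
The plan is to use a double-counting argument based on the face-incidence structure of a spherical diagram. First I would translate the harmonic mean condition into an additive form: for each face $f$, the neighbour faces $f'_1, \ldots, f'_{d(f)}$ (one for each edge in $\partial f$, counted with multiplicity) satisfy $\mathrm{HM}(d(f'_1),\ldots,d(f'_{d(f)})) \geq p$, which is equivalent to
\[
\sum_{f' \sim f} \frac{1}{d(f')} \;\leq\; \frac{d(f)}{p},
\]
where the sum ranges over neighbours of $f$ with multiplicity (one summand per edge of $\partial f$). Since $M$ is a sphere, $\partial M = \emptyset$, so every face is interior and this inequality applies to every $f \in \faces{M}$.

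Next I would sum this inequality over all faces of $M$. For the right-hand side, using that in a spherical diagram each edge bounds exactly two face-incidences (so $\sum_{f} d(f) = 2\EE$), we get
\[
\sum_{f \in \faces{M}} \frac{d(f)}{p} \;=\; \frac{2\EE}{p}.
\]

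The key observation is that the left-hand side evaluates to exactly $\FF$. To see this, regroup the double sum by the face $f'$ that appears in the denominator: for a fixed $f'$, the number of pairs $(f, e)$ with $e \in \partial f$ and $f'$ lying on the opposite side of $e$ from $f$ equals the number of edges in $\partial f'$, counted with multiplicity, which is $d(f')$. Each such pair contributes $\tfrac{1}{d(f')}$, so the total contribution of $f'$ is $d(f')\cdot \tfrac{1}{d(f')} = 1$, and summing over $f'$ gives
\[
\sum_{f} \sum_{f'\sim f} \frac{1}{d(f')} \;=\; \sum_{f' \in \faces{M}} 1 \;=\; \FF.
\]
Combining the two sides yields $\FF \leq \tfrac{2}{p}\EE$, as required.

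The main thing to be careful about is treating the multiplicities consistently: if two faces share several edges, or if a face is adjacent to itself along some edge, every such edge must contribute a separate summand on both sides of the double-counting identity. Provided the harmonic mean in the definition is interpreted with this multiplicity convention (which is the natural one, since it is exactly what makes the regrouping above clean), the argument is essentially a one-line computation after setting up the notation.
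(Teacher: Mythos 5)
Your proof is correct and is exactly the paper's intended argument (the paper merely remarks that the proposition "follows from the definition... by considering the sum over the faces of the diagram," which is the double-counting you have written out). The multiplicity convention you flag is indeed the one needed for the regrouping $\sum_{f}\sum_{f'\sim f}\frac{1}{d(f')}=\FF$ and for the identity $\sum_f d(f)=2\EE$, and your observation that all faces of a spherical diagram are interior is the reason the hypothesis applies globally.
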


\begin{prop}
 If $P$ satisfies $\HMV(q)$ then every reduced spherical diagram over $P$ has $\VV\leq \frac{2}{q} \EE$.
\end{prop}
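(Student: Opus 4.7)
The plan is a straightforward double-counting argument using the harmonic mean characterization.

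First, I would rewrite the hypothesis $\HMV(q)$ in a more convenient form. For each interior face $f$ with $d(f)$ corners $c_1,\ldots,c_{d(f)}$ at vertices $v_1,\ldots,v_{d(f)}$ (listed with multiplicity along $\partial f$), the harmonic-mean condition
$$\frac{d(f)}{\sum_{c\in f}\frac{1}{d(v(c))}}\geq q$$
is equivalent to
$$\sum_{c\in f}\frac{1}{d(v(c))}\leq \frac{d(f)}{q}.$$
Since $\varphi\colon M\to K_P$ is a reduced spherical diagram, every face of $M$ is interior, so this inequality applies to every face.

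Next I would sum both sides over all faces and exchange the order of summation. The key combinatorial fact is that on a closed surface (in particular the sphere) the number of corners at a vertex $v$ equals $d(v)$. Hence
$$\sum_{f\in\FF(M)}\sum_{c\in f}\frac{1}{d(v(c))}=\sum_{v\in\VV(M)}\sum_{c\ni v}\frac{1}{d(v)}=\sum_{v\in\VV(M)}d(v)\cdot\frac{1}{d(v)}=\VV.$$
On the other hand,
$$\sum_{f\in\FF(M)}\frac{d(f)}{q}=\frac{1}{q}\sum_{f\in\FF(M)}d(f)=\frac{2\EE}{q},$$
using that each edge of a spherical diagram lies in the boundary of exactly two faces. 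Chaining the two computations through the hypothesis gives $\VV\leq \frac{2}{q}\EE$, as desired.

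The only subtle point — and the one I would want to verify carefully — is the corner-counting identity, i.e.\ that the total number of corners at a vertex $v$ in a spherical diagram is $d(v)$. This comes from the fact that the link of an interior vertex in an orientable surface diagram is a disjoint union of cycles, so the number of link-edges (which is the number of corners at $v$) equals the number of link-vertices (which is $d(v)$, the number of edge-ends incident to $v$, loops counted twice). Once this identity is in hand, the rest is bookkeeping and no further use of the diagram's geometry is needed.
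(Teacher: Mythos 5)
Your proof is correct and is exactly the argument the paper intends: the paper gives no proof beyond the one-line hint that the propositions "follow from the definition ... by considering the sum over the faces of the diagram," and your double-counting of corners (summing $\sum_{c\in f} 1/d(v(c)) \le d(f)/q$ over faces, exchanging the order, using that the number of corners at each vertex equals its degree, and $\sum_f d(f)=2\EE$) is precisely that computation.
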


The following results generalize the classical results on $C(p)-T(q)$.

\begin{cor}
 If $P$ satisfies $\HMF(p)-\HMV(q)$ and $\frac{1}{p}+\frac{1}{q}\leq \frac{1}{2}$ then $P$ is diagrammatically reducible. 
\begin{proof}
Suppose there is a reduced spherical diagram over $P$. By the previous propositions we have
$$ 2= \chi(S^2)= \VV-\EE+\FF \leq \frac{2}{q} \EE - \EE + \frac{2}{p} \EE \leq 0,$$
a contradiction.
\end{proof}
\end{cor}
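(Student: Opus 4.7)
The plan is to argue by contradiction using Euler's formula on the 2-sphere, combining both harmonic bounds with the standard alternating sum. Suppose, toward a contradiction, that there exists a reduced spherical diagram $\varphi\colon M\to K_P$. Since $M$ is a sphere, every face is interior, so both preceding propositions apply on the nose: from $\HMF(p)$ we get $\FF(M)\leq \tfrac{2}{p}\EE(M)$, and from $\HMV(q)$ we get $\VV(M)\leq \tfrac{2}{q}\EE(M)$.

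Next I plug these into $\chi(S^2)=2$. Using $\VV-\EE+\FF=2$ and the two inequalities above,
\[
2 \;=\; \VV(M)-\EE(M)+\FF(M)\;\leq\; \tfrac{2}{q}\EE(M) - \EE(M) + \tfrac{2}{p}\EE(M) \;=\; \EE(M)\Bigl(\tfrac{2}{p}+\tfrac{2}{q}-1\Bigr).
\]
The hypothesis $\tfrac{1}{p}+\tfrac{1}{q}\leq \tfrac{1}{2}$ rewrites as $\tfrac{2}{p}+\tfrac{2}{q}\leq 1$, so the bracketed factor is $\leq 0$. Since $\EE(M)\geq 0$, the right hand side is $\leq 0$, contradicting $2>0$. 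Thus no reduced spherical diagram over $P$ exists, and consequently $P$ is diagrammatically reducible in the sense recalled in Remark \ref{RemarkArtinDR}.

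There is essentially no obstacle once the two preceding propositions are in hand: the argument is a verbatim analogue of the classical $C(p)-T(q)$ curvature calculation, with the harmonic means playing the role of the sharp bounds on face and vertex degrees. The only mild subtlety worth flagging is that the inequality $\EE(M)(\tfrac{2}{p}+\tfrac{2}{q}-1)\geq 2$ is vacuously impossible regardless of whether $\EE(M)=0$ or $\EE(M)>0$, so no case split on the size of $M$ is needed; and the implicit assumption that $P$ has no proper powers (built into the very notion of DR) matches the standing convention of the section.
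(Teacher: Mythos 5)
Your argument is exactly the paper's: apply the two preceding propositions to bound $\VV$ and $\FF$ by $\frac{2}{q}\EE$ and $\frac{2}{p}\EE$, substitute into $\chi(S^2)=2$, and observe that the hypothesis forces the right-hand side to be nonpositive, a contradiction. The proposal is correct and matches the paper's proof step for step.
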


\begin{example}
 If $n\geq 5$, the presentation 
 $$\langle a_1, b_1,\ldots, a_n,b_n \mid [a_1,b_1][a_2,b_2]\cdots[a_n,b_n], a_1a_2a_3a_4a_5 \rangle$$ is not $C(6)$ but
 satisfies $\HMF(6)$ so it is DR.
\end{example}

\begin{remark}
 Since conditions $\HMF(p)$ and $\HMV(q)$ make sense for rational values of $p$ and $q$,
 so the previous result can be applied for example to presentations that do not satisfy neither $C(6)$ nor $T(4)$ but which satisfy $\HMF(4+\frac{1}{3})-\HMV(3+\frac{1}{2})$.
\end{remark}

\begin{definition}[A combined harmonic small cancellation condition]
Say that a presentation $P$ satisfies condition $\HHarmonic$ if for every reduced diagram and every interior face $f$, the harmonic mean of the degrees of all vertices and neighbor faces of $f$ is at least $4$.
\end{definition}
From the definition it is easy to see that $\HMF(p)-\HMV(q)$ implies $\HHarmonic$ provided that $\frac{1}{p}+\frac{1}{q}\geq \frac{1}{2}$.
Moreover, with the same argument used above we see that $\HHarmonic$ implies DR.

\begin{remark}
The harmonic small cancellation conditions $\HMF(p)$, $\HMV(q)$ and $\HHarmonic$ can be decided by an algorithm.
\end{remark}

\begin{remark}
 It is possible to prove, with some care and some additional mild hypotheses that the group presented by a $\HMF(p)-T(q)$ presentation is hyperbolic if $\frac{1}{p}+\frac{1}{q}<\frac{1}{2}$.
\end{remark}

\begin{remark}
If we include the central face $f$ in the average we obtain similar definitions satisfying similar properties that we shall not discuss here.
We can also consider dual conditions (considering the harmonic mean around a vertex instead of around a face).
Replacing the harmonic mean by the arithmetic mean seems not to be enough to obtain similar results.
\end{remark}

\bibliographystyle{plain}
\bibliography{references}

\end{document}